\documentclass[10pt]{article}
\usepackage{graphicx}
\usepackage{latexsym}
\usepackage{amsmath}
\usepackage{amssymb}
\usepackage{amsthm}
\usepackage{multirow}
\usepackage{authblk}
\usepackage[spaces,hyphens]{url}
\usepackage{bm}
\usepackage[margin=1in]{geometry}
\voffset=-1cm
\textheight=630pt
\textwidth=450pt
% \topmargin-.75in
%%%%%%%%%%%%%%%%%%%%%%%%%%%%%%%%%%%%%%%
\usepackage{hyperref}
\usepackage{tikz}
\usepackage{multicol}
\usepackage{mathtools}
%%%%%%%%%%%%%%%%%%%%%%%%%%%%%%%%%

\def \N {{\mathbb N}}
\def \Z {{\mathbb Z}}

\def \H {{\mathbb H}}

%%%%%%%%%%%%%%%%%%%%%%%%%%%%%%%%%%%%

\def \E {{\mathcal E}}
\def \G {{\mathcal G}}

\def \V {{\mathcal V}}

%%%%%%%%%%%%%%%%%%%%%%%%%%%%%%%%%%%%%%%

%\def \o {{\mathbf 0}}
%\def \u {{\mathbf u}}

%%%%%%%%%%%%%%%%%%%%%%%%%%%%%%%%%%%%%%%%
\def \al {{\alpha}}

\def \Ga {{\Gamma}}

%%%%%%%%%%%%%%%%%%%%%%%%%%%%%%%%%%%%%%%%%%%%%%%%%%%%%%%%%%%
\usepackage[colorinlistoftodos,prependcaption,textsize=tiny]{todonotes}
\usepackage{xargs} % Use more than one optional parameter in a new commands
\newcommandx{\unsure}[2][1=]{\todo[linecolor=red,backgroundcolor=red!25,bordercolor=red,#1]{#2}}
\newcommandx{\change}[2][1=]{\todo[linecolor=blue,backgroundcolor=blue!25,bordercolor=blue,#1]{#2}}
\newcommandx{\newidea}[2][1=]{\todo[linecolor=green,backgroundcolor=green!25,bordercolor=green,#1]{#2}}
\newcommandx{\improvement}[2][1=]{\todo[linecolor=pink,backgroundcolor=pink!25,bordercolor=pink,#1]{#2}}

\newtheorem{theorem}{Theorem}[subsection]
\newtheorem{cor}[theorem]{Corollary}
\newtheorem{lemma}[theorem]{Lemma}
\newtheorem{pro}[theorem]{Proposition}

\newtheorem{definition}[theorem]{Definition}

\newtheorem{ex}[theorem]{Example}

\setlength{\tabcolsep}{4.8pt} %horizontal gap of tables

\setlength{\arrayrulewidth}{0.2mm}
%arrayrulecolor{blue}
\usepackage{setspace}
\usepackage[square,sort,comma,numbers]{natbib}
\usepackage[english]{babel}
\usepackage[euler]{textgreek}
\usepackage{stmaryrd}
\usepackage{mathtools}
\usepackage{pgf,tikz}
\usepackage{tikz}
\usepackage{tkz-graph}
\usepackage{array}
\tikzstyle{vertex}=[circle,draw, inner sep=0pt, minimum size=4pt] 
\newcommand{\vertex}{\node[vertex]}
\usetikzlibrary{decorations.markings}
\title{Spectral Bounds of the Generating Graph of $\Z_n$}
\author[1]{Kavita Samant}
\author[2]{A. Satyanarayana Reddy}
\affil[1,2]{Department of Mathematics\\Shiv Nadar Institution of Eminence, Delhi-NCR, India.}
\affil[1]{Corresponding author. E-mail id(s): ks299@snu.edu.in;}
\affil[2]{Contributing author: satya.a@snu.edu.in}

\date{}

\begin{document}
\maketitle
\begin{abstract}
    Let \( G \) be a group. A group is said to be \( k \)-\emph{generated} if it can be generated by its \( k \) elements. A generating set of \( G \) is called a \emph {minimal generating set} if no proper subset of it generates \( G .\) A minimal generating set of a group can have different sizes. The \emph {generating graph} \(\Gamma (G) \) of a group \( G \) is defined as a graph with the vertex set \( G \), where two distinct vertices are adjacent if they together generate \( G .\) This graph is particularly useful when studying 2-generated groups.

    In this context, consider the group \( G = \mathbb{Z}_n \), the integers modulo \( n .\) In this paper, we explore various graph-theoretic properties of the generating graph \( \Gamma(\Z_n) \) and investigate the spectra of its adjacency and Laplacian matrices. Additionally, we explicitly determine the set of all possible  minimal generating sets of \( \mathbb{Z}_n \) of size $k.$
\end{abstract}
\textbf{Keywords.} Generating graphs; Comaximal graphs; Cyclic groups; Adjacency matrix; Laplacian matrix; Spectrum.\\
\textbf{Mathematics Subject Classification.} 05C25, 05C50, 20D60.

\section{Introduction}
A group \( G \) is called \emph {cyclic} if it can be generated by a single element. A finite cyclic group of order \( n \), is isomorphic to the group of integers modulo \( n \), denoted by \( \mathbb{Z}_n \), where we simply write \( \mathbb{Z}_n = \{0, 1, 2, \dots, n-1\} .\) The set of all generators of \( \mathbb{Z}_n \) is precisely the set \( \{a \in \mathbb{Z}_n \,|\, \gcd(a, n) = 1\} \), which is denoted by \( U(n) .\) The set \( U(n) \) forms a group under the multiplication modulo \( n .\)

A group \( G \) is said to be \( k \)-\emph {generated} if there exists a set of \( k \) elements that generates \( G .\) Clearly, if a group is \( k \)-generated, it is also \( l \)-generated for all \( k \leq l \leq |G| .\) For \(G= \mathbb{Z}_n \), the group is \( k \)-generated for any \( 1 \leq k \leq n .\) However, these generating sets are not necessarily minimal. 

In this article, we determine minimal generating sets of all possible sizes of \( \mathbb{Z}_n .\) However, our primary focus will be on identifying and analyzing 2-generated sets of \( \mathbb{Z}_n .\)

The {\em generating graph} of $G,$ denoted by $\Ga(G),$ is a graph whose vertex set is $G,$ and any two distinct vertices are adjacent if they generate $G.$ Thus, it is natural to consider only two generated groups; otherwise, the generating graphs will be empty. The fundamental idea behind generating graphs is the generation of groups by its two elements, which was first studied in a probabilistic way. The story starts in 1892, with a conjecture of {E. Netto}~\cite{netto}, that most pairs of elements of the symmetric group generate the symmetric group or the alternating group with a probability near 1 for all sufficiently large $n.$
This conjecture was proved by Dixon~\cite{dixon} in 1969. In his paper, Dixon makes rather a general conjecture that two elements chosen at random from a finite simple group $G$ generates $G$ with probability $\rightarrow 1$ as $|G|\rightarrow \infty.$
The idea can be stated as any two elements selected at random from a finite group $G$ of cardinality $n,$ generate $G$ with the probability $P(G),$ which is defined as
$$P(G)=\frac{|\{(x,y)\in G\times G:\langle x,y\rangle=G\}|}{\binom{n}{2}}.$$  

Later, concerning the study of the structure of two generated groups and their generating pairs, the concept of generating graphs was introduced by Andrea Lucchini and Attila Mar\'oti in~\cite{some,clique}. In these papers, they investigated various graph theoretic properties of generating graphs of finite groups and proposed many questions. Former for finite simple groups, many deep results have been studied using the probabilistic notion by Liebeck and Shalev in~\cite{Lieback}, Guralnick and Kantor in~\cite{Guralnick} and Andrea Lucchini and E. Detomi in~\cite{MR1987022} which can be equivalently stated as theorems that ensure that $\Ga(G)$ is a rich graph. 

In the literature, for finite dihedral and dicyclic groups, the spectrum of the various associated matrices of the generating graph has been completely determined in~\cite{self,self2}. In this paper, we are interested in studying the spectral properties for the finite cyclic groups.

Recall that for any finite simple undirected graph $\G$ with the vertex set $V(\G)=\{v_1,\dots,v_m\}.$ The adjacency matrix $A(\G)$ is an $m\times m$ matrix, where each entry is either 1 or 0 depending on whether the vertices that correspond to a row and a column are adjacent or not, respectively. The Laplacian matrix $L(\G):=D(\G)-A(\G),$ where $D(\G)=\text{diag}(\alpha_1,\alpha_2,\dots,\alpha_m)$ is the degree matrix. Both the matrices are real and symmetric. Thus, all their eigenvalues can be arranged in a non-increasing order.

 Throughout this article, we represent $\Z_n$ as the additive groups of integers modulo $n.$ In this paper, we study the generating graph $\Ga(\Z_n)$ with the vertex set $\{0,1,2,\dots,n-1\}.$ We use \(\E_n\) as an abbreviation for \(\Ga(\Z_n).\) Our objective is to explore the graph-theoretical properties of \(\E_n\) to gain a deeper understanding of its structure. 
We are primarily interested in determining the spectra of the associated matrices with the graph, specifically adjacency and Laplacian matrices.

The group \(\Z_n\) possesses a ring structure under addition and multiplication modulo \(n.\) It has been observed that for any \(n \in \N\), the graph \(\E_n\) is isomorphic to the co-maximal graph of \(\Z_n.\) The co-maximal graph, denoted by \(\Gamma_C(R)\), of a commutative ring \(R\) with identity is defined as a graph whose vertices correspond to the elements of \(R.\) Two distinct vertices \(a\) and \(b\) are adjacent if and only if \(Ra + Rb = R.\) 

The co-maximal graph of \(\mathbb{Z}_n\) has been investigated in~\cite{Add,Lap1,Lap2}, where the spectra of its adjacency and Laplacian matrices were analyzed. However, a complete characterization of the spectrum remains unresolved. In this paper, we employ matrix analysis techniques to establish tighter bounds for the eigenvalues of the adjacency and Laplacian matrices.

For any \(n \in \N\), we denote \(n_0\) as the square-free part of \(n.\) We demonstrate that understanding the structure of \(\E_{n_0}\) is sufficient for analyzing \(\E_n.\) In other words, we see that the graphs \(\E_n\) and \(\E_m\) with the same set of prime divisors (\(n_0 = m_0\)) exhibit similar behavior. Consequently, we show that determining eigenvalue bounds for \(\E_{n_0}\) will be sufficient to figure out bounds in general, for any $m,$ with $m_0=n_0.$
In the article, concerning the Laplacian matrix we show that the eigenvalues of \(\E_n\) are scalar multiple of the eigenvalues of \(\E_{n_0}\), with the scalar factor \(\frac{n}{n_0}.\)

The paper is organized into six sections. In Section~\ref{sec:structure}, we give explicit sets of the minimal generating sets of \(\Z_n\) of various sizes. Section~\ref{sec:graph} focuses on the structure of the generating (or co-maximal) graph of \(\Z_n.\) Sections~\ref{sec:adjacency} and~\ref{sec:laplacian} address the spectra of the adjacency and Laplacian matrices, respectively. Finally, in the last section, we conclude with a summary of results.

\section{Minimal Generating Sets of $\Z_n$}\label{sec:structure}
A generating set of a group is called \emph {minimal} if removing any element from the set results in a subset that no longer generates the group. The sizes of minimal generating sets can vary within a group. For instance, the group \( \mathbb{Z}_6 \) has minimal generating sets such as \( \{1\} \) and \( \{2, 3\} .\) In contrast, for \( \mathbb{Z}_p \), where \( p \) is a prime, a minimal generating set can be of size one only.

This section focuses on identifying all possible sizes of minimal generating sets for the group \( \mathbb{Z}_n .\) In 1935, Philip Hall introduced a formula to determine the number of ways to generate the symmetry group of an icosahedron using a specific number of its elements. In the paper~\cite{Hall}, he demonstrated that this method could be extended to any finite group, provided its subgroup structure is sufficiently understood. For a finite group \(G\), Hall defined a function that counts the number of ways to generate \(G\) using \(k\) elements. This function, now known as the generalized Eulerian function, is denoted by \(\Phi_k(G).\) 

In the thesis~\cite{Thesis}, the author derives explicit formulas for \(\Phi_k(G)\) for specific classes of groups, including cyclic groups, dihedral groups, and \(p\)-groups. However, our focus will be on minimal generating sets of \(\mathbb{Z}_n\), which are independent sets that generate the group. We also develop an explicit number-theoretic method to identify all such minimal generating sets and possible sizes. Once the complete set of minimal generating sets is determined, its cardinality can be easily calculated. In addition that cardinality is always less than equal to $\Phi_k(G).$
We begin by introducing the essential notations and mathematical symbols necessary for understanding our work. To maintain coherence and clarity, we have adopted the following notations.

Let $n=p_{1}^{e_{1}} p_{2}^{e_{2}}\dots p_{r}^{e_{r}}$ be the prime factorization of $n, $ where $p_1<p_2< \dots < p_r$ are distinct primes and ordered. Let $n_0$ be the square free part of $n,$ {\it i.e.,} $n_0=p_1p_2\cdots p_r.$ We denote $\tau(n), \omega(n)$ and $\varphi(n)$ as the number of positive divisors of $n,$ the number of distinct prime divisors of $n$ and the Euler totient function of $n,$ respectively.
For each $n,$ we can find all possible minimal generating sets of size m, where $m\in \N.$

We begin by identifying the possible generating pairs in the group \( \mathbb{Z}_n \),{\it i.e.,} pairs \( a, b \in \mathbb{Z}_n \) such that \( \langle a, b \rangle = \mathbb{Z}_n .\) By definition, \( \langle a, b \rangle = \{ax + by \mid x, y \in \mathbb{Z}\} .\) Hence, \( \langle a, b \rangle = \mathbb{Z}_n \) if and only if \( \gcd(a, b, n) = 1 .\) Note that $a\in \Z_n$ with $\gcd(a,n)=1,$ will always be a generating pair. But if $a$ or $b$ does not belong to $U(n),$ then we have to figure out which ones are those.

Let $\tilde G_2$ be the set of ordered pairs of  $\Z_n$ that generates the entire group. Hence we can conclude that
\begin{align}\label{eq:pair}
 \tilde G_2&=\{(a,b) \,|\, a\neq b \in \Z_n \text { such that } \gcd(a,b)\in U(n)\}.
\end{align}
To understand the generating pairs well, we partition the set \(\{0, 1, 2, \dots, n-1\}\) in such a way that the elements within each subset exhibit similar behaviour with respect to the specified generating property.
We define a relation $\sim$ as follows.
\begin{equation}\label{eq:rel}
 a\sim b \text{ whenever } \gcd(a,n_0)=\gcd(b,n_0).
\end{equation}
It is an equivalence relation. The number of equivalence classes equals the number of divisors of $n_0,$ denoted by $\tau(n_0),$ and exactly equal to $2^r.$ Now corresponding to each divisor $d$ of $n_0,$ the equivalence class is given by 
\begin{equation}\label{eq:class}
 [d]=\{a\in \Z_n|\gcd(a,n_0)=d\}. 
\end{equation}
For a general $n,$ the best representatives of the equivalence classes are the divisors of $n_0.$ One can observe a one-to-one correspondence between the set of divisors of $n_0$ and the set of all binary sequences of length $r,$ where $r$ is the total number of prime factors of $n_0.$ For example, if $n=24,$ then $n_0=6.$ If $d_{(ab)}|6,$ then $d_{(ab)}=2^a3^b,$ where $a,b\in \{0,1\},$ that is, the corresponding binary sequence with respect to $d_{(ab)}$ is $(ab).$ In particular, $3|6$ and we denote $3=d_s,$ where $s=(01).$ 
The following result concludes the cardinality of each equivalence class.
 \begin{theorem}
 For a given $n,$ the size of an equivalence class of the set $\{0,1,2,\dots,n-1\}$ is given by 
    $$|[d_s]|=
 \frac{n}{n_0}\varphi\left(\frac{n_0}{d_{s}}\right).$$
 \end{theorem}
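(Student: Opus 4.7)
The plan is to count the elements $a \in \{0,1,\dots,n-1\}$ with $\gcd(a,n_0)=d_s$ by reducing to a count of residues coprime to $n_0/d_s$ and then exploiting periodicity.

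First I would observe that $d_s = \gcd(a,n_0)$ forces $d_s \mid a$, so every element of $[d_s]$ has the form $a = d_s b$ for a unique $b \in \{0,1,\dots,\tfrac{n}{d_s}-1\}$. This turns the problem into counting admissible values of $b$.

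Next I would use the key structural fact that $n_0$ is squarefree: since $d_s$ is a divisor of $n_0$, we have $n_0 = d_s \cdot (n_0/d_s)$ with $\gcd(d_s, n_0/d_s)=1$. This multiplicativity gives
\[
\gcd(a,n_0) \;=\; \gcd(d_s b,\, d_s \cdot n_0/d_s) \;=\; d_s \cdot \gcd(b,\, n_0/d_s),
\]
so the condition $\gcd(a,n_0)=d_s$ is equivalent to $\gcd(b, n_0/d_s)=1$.

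Finally I would count such $b$ in $\{0,1,\dots,\tfrac{n}{d_s}-1\}$. Writing $\tfrac{n}{d_s} = \tfrac{n}{n_0}\cdot \tfrac{n_0}{d_s}$, I partition this range into $\tfrac{n}{n_0}$ consecutive blocks of length $\tfrac{n_0}{d_s}$. Since coprimality to $m:=n_0/d_s$ is periodic modulo $m$, each block contains exactly $\varphi(m)$ integers coprime to $m$ (a quick check handles the block containing $0$: in $\{0,1,\dots,m-1\}$ the integers coprime to $m$ number $\varphi(m)$ for $m>1$, and for $m=1$ the formula $\varphi(1)=1$ correctly counts the single value $b=0$). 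Summing over the $\tfrac{n}{n_0}$ blocks yields $|[d_s]| = \tfrac{n}{n_0}\,\varphi(n_0/d_s)$.

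I do not anticipate a genuine obstacle here — the argument is essentially bookkeeping. The only delicate point is making sure the squarefree hypothesis on $n_0$ is used cleanly to split the gcd, and that the boundary case $d_s = n_0$ (where $b=0$ is allowed and $\varphi(1)=1$) is consistent with the formula.
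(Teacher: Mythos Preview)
Your argument is correct and follows essentially the same route as the paper: reparametrize $a=d_s b$, reduce $\gcd(a,n_0)=d_s$ to $\gcd(b,n_0/d_s)=1$, and count using that $n/d_s$ is a multiple of $n_0/d_s$. One small remark: the identity $\gcd(d_s b,\,d_s\cdot n_0/d_s)=d_s\gcd(b,\,n_0/d_s)$ is just $\gcd(kx,ky)=k\gcd(x,y)$ and does not require the coprimality of $d_s$ and $n_0/d_s$; the squarefreeness of $n_0$ is really only used implicitly to guarantee $n_0\mid n$ so that $n/d_s$ splits into $n/n_0$ full blocks of length $n_0/d_s$.
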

 \begin{proof}
 Let us analyze each class $[d_{s}],$ where $d_s$ corresponds to a divisor of $n_0,$ and $s$ represents a binary sequence. Note that we can also see Equation~\ref{eq:class} as 
    \begin{equation}\label{eq:class2}
 [d_{s}]=\left\{ad_s\,|\, a\in \Z_n \text{ with }\gcd\left(a,\frac {n_{0}}{d_{s}}\right)=1 \right\}.
 \end{equation} 
 In the case when $s=(00\ldots0),$ the corresponding divisor is $1,$ and thus the class $$[1]=\{a\in \Z_n \,|\,\gcd(a,n_{0})=1 \}=\{a\in \Z_n \,|\,\gcd(a,n)=1 \}=U(n).$$ Therefore there are exactly $\varphi(n)$ elements in the class $[d_{s}].$ Note that $\varphi(n)=\frac{n}{n_0}\varphi(n_0).$ However, if $s\neq (00\ldots0),$ then by Equation~\ref{eq:class2}, we get
    \begin{equation}\label{eq:csize}
 |[d_{s}]|= \frac{n}{n_0}\varphi\left(\frac{n_0}{d_{s}}\right).
        \end{equation}
    \end{proof}
Now, we have the following implication. This result will later help to relate the cases when $n$ and $m$ have the same set of prime divisors. 

\begin{cor}
 For $n=n_0,$ the size of an equivalence class of the set $\{0,1,2,\dots,n_0-1\}$ is given by 
    $$|[d_s]|=\begin{dcases}
      \varphi(n_0)& d_s=1, \\
      \varphi\left(\frac{n_0}{d_{s}}\right)&\text{otherwise}.
      \end{dcases}$$
   \end{cor}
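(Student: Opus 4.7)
The plan is to derive this corollary as an immediate specialization of the preceding theorem to the case $n = n_0$. The theorem established that $|[d_s]| = \frac{n}{n_0}\varphi\!\left(\frac{n_0}{d_s}\right)$ for every divisor $d_s$ of $n_0$, and the corollary simply records what happens when the scaling factor $\frac{n}{n_0}$ collapses to $1$.

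Concretely, I would proceed as follows. First, I would observe that substituting $n=n_0$ into the formula from the theorem gives $|[d_s]| = \varphi\!\left(\frac{n_0}{d_s}\right)$ uniformly over all divisors $d_s$ of $n_0$. Second, I would unpack the $d_s = 1$ case separately, noting that $[1] = U(n_0)$ by the description in Equation~\ref{eq:class2}, and hence $|[1]| = \varphi(n_0)$, which is consistent with $\varphi(n_0/1)$. Third, for $d_s \neq 1$, the formula $\varphi(n_0/d_s)$ is just the specialization of the theorem, since the quotient $\frac{n_0}{d_s}$ remains a well-defined divisor of $n_0$ and the characterization of $[d_s]$ in Equation~\ref{eq:class2} reduces to counting integers in $\Z_{n_0}$ coprime to $\frac{n_0}{d_s}$, of which there are exactly $\varphi(n_0/d_s)$.

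There is no substantive obstacle here; the corollary is essentially a re-statement of the theorem with $\frac{n}{n_0} = 1$, and the case split in the statement is cosmetic, intended to emphasize that the class of units $U(n_0)$ arises as the $d_s = 1$ equivalence class. The only care needed is to confirm that $\varphi(n_0/d_s)$ is well defined for all divisors, which follows from $d_s \mid n_0$.
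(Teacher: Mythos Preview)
Your proposal is correct and matches the paper's approach exactly: the paper states this corollary as an immediate implication of the preceding theorem with no separate proof, and your specialization of $\frac{n}{n_0}$ to $1$ (together with the observation that the case split is cosmetic since $\varphi(n_0/1)=\varphi(n_0)$) is precisely the intended reasoning.
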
 
   \begin{lemma}\label{lem:ds}
 If $s\neq (00\dots 0),$ then no two elements in the class $[d_s]$ will be the generating pairs.
   \end{lemma}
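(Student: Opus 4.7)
The plan is to exploit the characterization of generating pairs in Equation~(\ref{eq:pair}): $(a,b)$ generates $\Z_n$ if and only if $\gcd(a,b) \in U(n)$, equivalently, no prime divisor of $n$ divides both $a$ and $b$.

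First I would observe that since $s \neq (00\dots 0)$, the associated divisor $d_s$ of $n_0$ is strictly greater than $1$, so it has at least one prime factor $p_i$ (which is necessarily one of the primes appearing in the factorization of $n$). By the definition of the equivalence class in Equation~(\ref{eq:class}), every element $a \in [d_s]$ satisfies $\gcd(a, n_0) = d_s$, so in particular $p_i \mid a$.

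Next, take any two distinct elements $a, b \in [d_s]$. The previous step gives $p_i \mid a$ and $p_i \mid b$, hence $p_i \mid \gcd(a,b)$. Since $p_i \mid n$, this forces $\gcd(\gcd(a,b), n) \geq p_i > 1$, so $\gcd(a,b) \notin U(n)$. By Equation~(\ref{eq:pair}), $(a,b) \notin \tilde{G}_2$, i.e., $a$ and $b$ do not generate $\Z_n$.

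There is no genuine obstacle here; the argument is essentially a one-line consequence of the definition of $[d_s]$ combined with the criterion $\gcd(a,b) \in U(n)$. The only subtlety worth stating explicitly in the write-up is that the prime divisors of $n_0$ coincide with those of $n$, which is why a common prime factor of $a$ and $b$ arising from $d_s \mid n_0$ automatically obstructs membership in $U(n)$.
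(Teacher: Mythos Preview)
Your proof is correct and follows essentially the same approach as the paper: both observe that since $s \neq (00\dots0)$ we have $d_s > 1$ with $d_s \mid n$, and every element of $[d_s]$ is divisible by $d_s$, so any two elements share a nontrivial common factor with $n$ and thus cannot form a generating pair. You isolate a single prime $p_i \mid d_s$ whereas the paper works with $d_s$ itself, but this is only a cosmetic difference.
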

   \begin{proof}
 If $s\neq (00\dots 0),$ any two element of $[d_s]$ have a common multiple $d_s$ which divides $n.$ Thus, it cannot be a generating pair.
   \end{proof}
 Let us find $\tilde G_2$ in the following examples.
   \begin{ex}
 Let $n=32=2^5$ and $n_{0}=2.$ Then we have exactly two equivalence classes with respect to the relation $\sim,$ which are as follows.
   \begin{align*}
 [1]&=\{1,3,5,7,9,11,13,15,17,19,21,23,25,27,29,31\}\\
 [2]&=\{0,2,4,6,8,10,12,14,16,18,20,22,24,26,28,30\}.
   \end{align*}
 Note that $\gcd(1,2)=1,$ then by the above lemma, we have
    $$\tilde G_2=[1]\times[1]\bigcup [1]\times[2]\bigcup [2]\times[1].$$
   \end{ex}
 Let us take the case when $n$ has more than one prime divisor. 
   \begin{ex}
 Let $n=24=2^{3}.3.$ The equivalence classes are as follows.
   $$[1]=\{1,5,7,11,13,17,19,23\},\quad
 [2]=\{2,4,8,10,14,16,20,22\}$$
   $$ [3]=\{3,9,15,21\},\quad
 [6]=\{0,6,12,18\}.$$
 In this example, the set $\tilde G_2$ can be seen as $$\tilde G_2=\left(\bigcup\limits_{d_s\mid n_0}[d_s]\times[1]\right)\bigcup\left(\bigcup\limits_{d_s\mid n_0}[1]\times[d_s]\right)\bigcup [2]\times[3]\bigcup[3]\times[2].$$
   \end{ex}
Having understood the nature of ordered pairs in the set \( \tilde{G}_2 .\) However, we are interested in finding the minimal generating sets of size 2. The generating behaviour of a set is independent of in which order the elements are. Therefore, we denote $G_2,$ as the set of minimal generating sets of size 2, which is given by
\begin{align*}
 G_2&=\{\{g_1,g_2\} \,|\, g_1, g_2 \in \Z_n\setminus U(n) \text { such that } \gcd(g_1,g_2)\in U(n)\}.\\
    &=\{\{g_1,g_2\}\,|\,g_i\in [d_{s_i}] \text{ where } d_{s_i}\neq 1 \text{ and } \gcd(d_{s_1},d_{s_2})= 1\}.
\end{align*}
In general, we define $G_k$ as the set of all minimal generating sets of size $k.$ 

\begin{theorem}
 For a given \( n\), the size of a minimal generating set, say \( k \), satisfies \( k \leq r \), where \( r \) denotes the number of distinct prime factors of \( n .\) Moreover, for $r>2,$ the set can be explicitly described as follows.
 { \footnotesize $$G_k=\Bigg\{\{g_1,\dots,g_k\}\,|\, g_i\in [d_{s_i}]\text{ with } \gcd(d_{s_1},\dots, d_{s_k})=1 \text{ but } \gcd(d_{i_1},\dots,d_{i_{k-1}})\neq 1 \text{, where } i_j\in \{{s_1},\dots, {s_k}\}\Bigg\}.$$}
\end{theorem}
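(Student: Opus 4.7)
The plan is to proceed in two steps: first establish the upper bound $k\leq r$, and then verify the claimed characterization of $G_k$ by proving set-equality in both directions. Throughout, I would rely on the fundamental fact that $\langle g_1,\dots,g_k\rangle=\Z_n$ if and only if $\gcd(g_1,\dots,g_k,n)=1$, which (because $n$ and $n_0$ share the same prime divisors) is equivalent to $\gcd(\gcd(g_1,n_0),\dots,\gcd(g_k,n_0))=1$. This lets me pass freely between the $g_i$ and their class representatives $d_{s_i}=\gcd(g_i,n_0)$.

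For the bound $k\leq r$, I would take an arbitrary minimal generating set $\{g_1,\dots,g_k\}$. By minimality, for each $i$ the deleted set $\{g_1,\dots,\widehat{g_i},\dots,g_k\}$ fails to generate $\Z_n$, so there exists a prime $p_i\mid n$ dividing every $g_j$ with $j\neq i$. The primes $p_1,\dots,p_k$ must be pairwise distinct: if $p_i=p_j$ for some $i\neq j$, then combining the two divisibility conditions forces $p_i$ to divide every $g_\ell$, contradicting the assumption that $\{g_1,\dots,g_k\}$ generates $\Z_n$. Since the $p_i$ are distinct prime divisors of $n$, we obtain $k\leq r$.

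For the characterization, I would prove inclusion both ways. Given $\{g_1,\dots,g_k\}\in G_k$, setting $d_{s_i}=\gcd(g_i,n_0)$ we have $g_i\in[d_{s_i}]$; the generating condition yields $\gcd(d_{s_1},\dots,d_{s_k})=1$, and applying minimality after deleting $g_i$ yields $\gcd(d_{s_1},\dots,\widehat{d_{s_i}},\dots,d_{s_k})\neq 1$, which is precisely the $(k-1)$-subset condition in the statement. Conversely, for any collection $\{g_1,\dots,g_k\}$ with $g_i\in[d_{s_i}]$ satisfying both gcd conditions, the same dictionary shows that the set both generates $\Z_n$ and is minimal, placing it in $G_k$.

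The step I expect to require the most care is the bookkeeping around multiplicities of the $d_{s_i}$. One must verify that no $d_{s_i}$ can equal $1$ (otherwise the corresponding $g_i\in U(n)$ alone generates $\Z_n$, destroying minimality of a set of size $k\geq 2$) and that no two of the $d_{s_i}$ can coincide (otherwise removing one leaves the other, making the $(k-1)$-subset gcd equal to the full gcd, which forces the two gcd conditions to contradict each other). Both facts follow directly from the $(k-1)$-subset condition together with Lemma~\ref{lem:ds}, but they are the subtle points where one has to be meticulous; once they are settled, the equivalence between the prime-by-prime obstruction picture used for the bound and the divisor-class picture used in the explicit formula becomes transparent.
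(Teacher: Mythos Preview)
Your proposal is correct and follows essentially the same approach as the paper. The only cosmetic difference is that the paper frames the bound $k\le r$ as a contradiction starting from a hypothetical minimal generating set of size $r+1$, whereas you argue directly for arbitrary $k$; the underlying pigeonhole step (assigning a prime to each deleted index and checking distinctness) is identical, and your treatment of the characterization---including the bookkeeping that $d_{s_i}\neq 1$ and that the $d_{s_i}$ are pairwise distinct---is, if anything, more explicit than the paper's.
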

\begin{proof}
Let $n\in \N$ with $r$ distinct prime divisors. We first prove that $\Z_n$ has minimal generating sets of size at most $r.$ We prove it by contradiction.
Assume that $\Z_n$ has a minimal generating set $S$ of size $r+1.$ Let's say $S=\{g_1,g_2,\dots, g_{r+1}\}.$ Note that the minimality of $S$ implies that there is no subset of size $r,$ which is a minimal generating set of $\Z_n.$ Thus, every subset of $S$ with size $S$ will share a common prime divisor among its elements. Since there are exactly $(r+1)$ subsets of $S$ of size $r.$ Thus, we must have $(r+1)$ distinct prime divisors of $n,$ otherwise $\gcd(g_1,\,g_2,\,\dots ,g_{r+1})=p$ for some common prime $p,$ which contradicts the minimality of $S.$ In addition, note that we have only $r$ distinct primes. Thus, it cannot be possible. Hence $k\leq r.$

We already determined the minimal generating set of size 2; now, to give an expression for a minimal generating set of size greater than 2, it is necessary that $r>2.$ Note that if $\langle\{g_1,g_2,\dots ,g_k\}\rangle=\Z_n,$ then there exists $x_1,x_2,\dots, x_n\in \N$ such that $$1=g_1x_1+g_2x_2+\dots +g_nx_n.$$ Furthermore, no subset of $\{g_1,g_2,\dots ,g_k\}$ can be a minimal generating set with a size smaller than $k.$ Thus, we can define $G_k$ as
 {\footnotesize$$G_k=\Bigg\{\{g_1,\dots, g_k\}\,|\, g_i\in Z_n\setminus U(n) \text{ with }\gcd(g_1,\dots, g_k)\in U(n)\text{ but } \gcd(g_{i_1},\dots,a_{i_{k-1}})\notin U(n) \text{, where }i_j\in \{1,\dots, k\}\Bigg\}.$$
 }
 Note that each $g_i$ belongs to an equivalence class; thus, the set $G_k$ contained in the union of the cartesian product of  $k$ distinct equivalence classes. Also, $g_i$ does not belong to the class of the type $[p_i]$ and $[n_0],$ where $p_i's$ are prime divisors of $n,$ so we exclude those classes. We define $$T(n)=\Bigg\{\bigcup_{i=1}^{r}[p_i]\cup [n_0]\cup [1]\Bigg\}.$$ Now we can modify the set $G_k$ as follows.
 {\footnotesize
\begin{align*}
 G_k=\Bigg\{\{g_1,\dots, g_k\}\,|\, g_i\in [d_{s_i}]\notin T(n) \text{ with }\gcd(d_{s_1},\dots, d_{s_k})=1 \text{ but }\gcd(d_{i_1},\dots,d_{i_{k-1}})\neq 1 \text{ ,where } i_j\in \{s_1,\dots, s_k\}\Bigg\}.
\end{align*}}
\end{proof}
The size of each $G_k$ can be easily computed once we know which $d_i's$ plays a role in the group generation. Let us illustrate the above discussion in the following example. 
\begin{ex}
Let $n=p_1^{e_1}p_2^{e_2}p_3^{e_3},$ where $p_i's $ are distinct primes and $e_i's\in \N.$ Then
$G_1=U(n),$ and therefore $$|G_1|=\varphi(n).$$ The set $G_2$ is given by
$$G_2=\bigcup\limits_{i\neq j}[p_i]\times [p_j],$$ and using Equation~\ref{eq:csize}, we get $$|G_2|=\left(\frac{n}{n_0}\right)^2\varphi(n_0)\left(\varphi(p_1)+\varphi(p_2)+\varphi(p_3)\right).$$
Lastly, $G_3=[p_1p_2]\times [p_1p_3]\times [p_2p_3],$ and by using Equation~\ref{eq:csize}, we get $$|G_3|=\left(\frac{n}{n_0}\right)^3\varphi(n_0).$$
\end{ex}

\section{Generating Graph or Comaximal Graph of $\Z_n.$}\label{sec:graph}
A graph \( \G \) is defined as a pair \( (V(\G), E(\G)) \), where \( V(\G) \) represents the set of vertices, and \( E(\G) \) denotes the set of edges connecting pairs of vertices. Two distinct vertices \( u \) and \( v \) are said to be adjacent if an edge exists between them, denoted by \( u \backsim v .\) A graph \( \G \) is termed \emph {simple} if it contains no multiple edges between any pair of vertices and no self-loops. In this paper, we focus exclusively on simple graphs. An \emph {empty} graph is one in which no edges exist between any pair of vertices, whereas a \emph {complete} graph \( K_n \) is a graph, where every pair of distinct vertices is adjacent. 
This section delves into the generating graph of \( \Z_n .\) The vertex set of \( \E_n \), denoted as \( V = \Z_n \), and the edge set, denoted as \( E = \{\{a, b\} \in V \times V \mid \langle a, b \rangle = \mathbb{Z}_n \} .\) It is evident that \( \{a, b\} \in E \) if and only if \( (a, b) \in \tilde{G}_2 .\) The group \( (\mathbb{Z}_n, \oplus_n) \), with multiplication modulo \( n \), possesses a ring structure.

In the existing literature, another notable graph associated with a ring structure is the \emph{co-maximal graph}. Let \( R \) be a commutative ring with identity. The co-maximal graph \( \Ga_C(R) \) is a graph, where the vertices correspond to the elements of \( R .\) Two distinct vertices \( a \) and \( b \) are adjacent if and only if \( Ra + Rb = R, \) where $Ra=\{ra:r\in R\}$ is an ideal generated by an element $a.$ Note that it is equivalent to say that $a$ and $b$ generate the group. Hence, both the graphs are isomorphic for $\Z_n.$ The comaximal graph of $\Z_n$ has been studied in the papers~\cite{Lap1,Lap2,Add}. In this section, we will explore these graphs more concerning the generating graphs. For fundamental definitions and results in graph theory, refer to any standard textbook on graph theory.
\subsection{Graph Decomposition} 
In this subsection, we will decompose the generating graph of a group by a collection of graphs. This decomposition proves to be a valuable tool in our analysis. We first introduce the necessary notations, definitions, and preliminary results to facilitate this approach.

\begin{definition}
 Let $\H$ be a graph with the vertex set $V(\H)=\{1,2,\dots, k\},$ the $\H$-join of a family of graphs $\{\G_i: 1\leq i\leq k\}$ is the graph $\H[\G_1,\G_2,\dots,\G_k]$ with the vertex set $\V=\bigcup\limits_{i=1}^{k}V_i,$ where $V_i$'s are the vertex sets of $\G_i$'s  and the edge set $E=\bigcup\limits_{i=1}^{k} E(\G_i)\bigcup\left\{\bigcup\limits_{\{i,j\}\in E(\H)}\{xy:x\in V(\G_i)\text{ and }y\in V(\G_j)\} \right\}
,$ where $E(\G_i)$ is the edge set of $\G_i.$
\end{definition}

\begin{lemma}\label{lem:class}
 Let $n=p_1^{e_1}p_2^{e_2}\dots p_r^{e_r},$ where $p_i's$ are distinct primes. Let $\{[d_1],[d_2],\dots, [d_{2^r}]\}$ be a partition of $n,$ where $d_i's$ are the divisors of $n_0.$ Then, the graph induced by these classes is either empty or complete. Moreover, the neighbourhood set of each element in a class $[d_i]$ of $\E_n$ is the same.
\end{lemma}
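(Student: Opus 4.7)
The plan is to split the lemma into the two asserted claims and handle each separately. For the induced-subgraph claim, I would appeal directly to Lemma~\ref{lem:ds} and to Equation~\eqref{eq:pair}. If $d_i = 1$, then $[1] = U(n)$ by Equation~\eqref{eq:class}, and for any two distinct units $a, b$ the quantity $\gcd(a,b)$ divides $a$, hence $\gcd(\gcd(a,b), n) = 1$, so $\gcd(a,b) \in U(n)$; thus $\{a,b\}$ is an edge by Equation~\eqref{eq:pair}, and $[1]$ induces a complete subgraph. If $d_i \ne 1$, Lemma~\ref{lem:ds} already supplies the empty-subgraph conclusion, since $d_i$ is a common divisor of every pair in $[d_i]$ which also divides $n$.

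For the common-neighbourhood claim, the key observation I would isolate is that adjacency in $\E_n$ depends on $a$ and $b$ only through the pair $(\gcd(a,n_0),\gcd(b,n_0))$. I would prove this in two small steps. First, since every prime dividing $n$ also divides $n_0$, we have $\gcd(a,b,n) = 1 \iff \gcd(a,b,n_0) = 1$. Second, because $n_0$ is squarefree, checking prime by prime gives
\[
\gcd(a,b,n_0) \;=\; \gcd\bigl(\gcd(a,n_0),\, \gcd(b,n_0)\bigr),
\]
because for each prime $p_i \mid n_0$, $p_i$ divides the left side iff $p_i \mid a$ and $p_i \mid b$, which is iff $p_i$ divides both $\gcd(a,n_0)$ and $\gcd(b,n_0)$.

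With this reduction in hand, the neighbourhood claim is immediate: for $a, a' \in [d_i]$ we have $\gcd(a,n_0) = \gcd(a',n_0)$, so for every $b \in \Z_n$ with $b \notin \{a,a'\}$,
\[
\gcd(a,b,n) \;=\; \gcd(a',b,n),
\]
hence $a \sim b \iff a' \sim b$. Combined with part one, this yields that $a$ and $a'$ are true twins when $d_i = 1$ (closed neighbourhoods coincide, since both contain each other) and false twins otherwise (open neighbourhoods coincide, since neither is adjacent to the other).

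I do not foresee a real obstacle; the only delicate point is the bookkeeping around whether $a'$ itself lies in $N(a)$, which is exactly what the first half of the lemma resolves. The heart of the argument is simply the squarefreeness of $n_0$, which allows $\gcd(\cdot,n_0)$ to serve as a complete invariant for the adjacency relation.
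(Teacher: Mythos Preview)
Your proof is correct and follows essentially the same approach as the paper: split into the $d_i = 1$ and $d_i \neq 1$ cases, invoke Lemma~\ref{lem:ds} for the latter, and appeal to the equivalence relation~\eqref{eq:rel} for the neighbourhood claim. You supply more detail than the paper's terse argument---in particular the explicit reduction $\gcd(a,b,n_0) = \gcd\bigl(\gcd(a,n_0),\gcd(b,n_0)\bigr)$ via squarefreeness of $n_0$---but the underlying structure is identical.
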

\begin{proof}
 Let $d_1=1.$ We know that $[d_1]$ corresponds to $U(n),$ thus induces a complete graph of degree $\varphi(n)-1.$ However, if $i\neq 1,$ then by Lemma~\ref{lem:ds} the graph induced by $[d_i]$ is empty. From Equation~\ref{eq:rel}, we get that each element in the class will have the same neighbourhood in $\E_n.$
\end{proof}
Now we can see that the generating graph of $\Z_n$ can be decomposed as an $\H$-join of graphs. We take $\H$ as an induced subgraph of the set $\{d_1,d_2,\dots, d_{2^r}\}.$ That is, any two vertices $d_i,d_j\in V(\H)$ are adjacent in $\H$ whenever $\gcd(d_i,d_j)=1.$ To construct the join of $\H,$ for each $1\leq i\leq 2^r,$ we take $\G_i$
as an induced subgraph of $\E_n$ by the equivalence class $[d_i].$ We take $\G_1$ induced by $[d_1=1],$ is $\varphi(n)-1$ regular, however, for each $j\geq 2,$ $ G_j$ is a $0$-regular graph. The neighbourhood set of each element in a class $[d_i]$ of $\E_n$ is same, thus $\H[\G_1,\G_2,\dots,\G_{2^r}],$ has the vertex set $\bigcup\limits_{i=1}^{2^r}V(\G_i)=V.$ However, the adjacency among the vertices of $\H$ decides whether $\G_i$'s are joined or not. Therefore, by the above definition, the edge set of $\H[\G_1,\G_2,\dots,\G_{2^r}]$ is the same as $E.$ So, we can write $$\E_n=\H[\G_1,\G_2,\dots,\G_{2^r}].$$
This is how we can decompose $\E_n$ into simpler known graphs. Now, we can conclude the following. We will carry forward the above notations.
\begin{pro}
    For $\E_n = \H[\G_1, \G_2, \dots, \G_{2^r}]$, the graph $\H$ is connected with a diameter of at most $2$. It is complete or bipartite if and only if $n = p_1^{e_1}$.
\end{pro}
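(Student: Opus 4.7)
The plan is to establish the three assertions in the proposition by directly analyzing the adjacency rule on $V(\H)=\{d_1,d_2,\dots,d_{2^r}\}$, where $d_i\sim d_j$ iff $\gcd(d_i,d_j)=1$. Throughout I will use that $d_1=1$ is one of the vertices (corresponding to the divisor $1$ of $n_0$).

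For connectivity and the diameter bound, I would first note that $\gcd(1,d_i)=1$ for every divisor $d_i$ of $n_0$, so the vertex $d_1=1$ is adjacent to every other vertex of $\H$. Consequently, any two distinct vertices $d_i,d_j$ are either adjacent (if coprime) or connected through the path $d_i - 1 - d_j$ of length $2$. This gives connectedness and $\mathrm{diam}(\H)\le 2$ in one line.

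For the second assertion, I would split into the two directions of the equivalence. If $n=p_1^{e_1}$, then $r=1$ and $n_0=p_1$, so $V(\H)=\{1,p_1\}$ with $\gcd(1,p_1)=1$; hence $\H=K_2$, which is simultaneously complete and bipartite, proving the ``if'' part. For the converse, assume $r\ge 2$. Then $\H$ contains the three divisors $1,p_1,p_2$ of $n_0$, and the relations $\gcd(1,p_1)=\gcd(1,p_2)=\gcd(p_1,p_2)=1$ force these to form a triangle; an odd cycle rules out bipartiteness. To rule out completeness when $r\ge 2$, I would exhibit a non-edge: the vertices $p_1$ and $p_1p_2$ are both divisors of $n_0$ but $\gcd(p_1,p_1p_2)=p_1\ne 1$.

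No single step looks like a genuine obstacle here; the only thing to be careful about is ensuring that for $r\ge 2$ the vertex $p_1p_2$ actually belongs to $V(\H)$, which is immediate since $p_1p_2\mid n_0$, and that the triangle $\{1,p_1,p_2\}$ is legitimate, which again follows because $1,p_1,p_2$ are distinct divisors of $n_0$ whenever $r\ge 2$. Assembling the two directions then yields the stated biconditional.
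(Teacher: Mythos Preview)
Your proof is correct and follows the same approach as the paper: both use adjacency of every vertex to $d_1=1$ for the diameter bound, and the triangle $\{1,p_1,p_2\}$ to rule out bipartiteness when $r\ge 2$. Your version is in fact more complete than the paper's, which omits the explicit verification of the ``if'' direction and the non-completeness argument via the non-edge $\{p_1,p_1p_2\}$.
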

\begin{proof}
 Each vertex of $\H$ is connected to $d_1=1,$ hence connected with diameter at most 2.
 If $n$ has more than one prime divisor, then the set $\{1,p_1,p_2\}$ will form a triangle. Therefore, it cannot be bipartite.
\end{proof}
\subsection{A Quick Analysis of the Graph Structure}

In this section, we will discuss the graph properties. Let us see some examples first.
\vglue 1mm
\begin{ex}
For $n=3,6,8,12,$ the generating graph of $\Z_n$ is given by 
\begin{figure}[!ht]
\centering
\begin{tabular}{cccc}
 \begin{tikzpicture}[scale=0.50]
    \vertex (1) at (2,6) {0};
    \vertex (2) at (3,7) {1};
    \vertex (3) at (4,6) {2};
    \path[-]
    (1) edge (2)
    (2) edge (3)
    (3) edge (1);
\end{tikzpicture}
&\hspace*{10mm}
 \begin{tikzpicture}[scale=0.50]
    \vertex (1) at (0,3) {1};
    \vertex (2) at (4,3) {2};
    \vertex (5) at (0,2) {5};
    \vertex (3) at (4,2) {3};
    \vertex (4) at (2,1.5){4};
   \vertex (6) at (2,5) {0};
    \path[-]
    (1) edge (2)
    (1) edge (4)
    (1) edge (5)
    (2) edge (3)
    (2) edge (5)
    (3) edge (4)
    (5) edge (4)
    (3) edge (5)
    (6) edge (1)
    (6) edge (5)
    (3) edge (1);
\end{tikzpicture}
&\hspace*{10mm}
\begin{tikzpicture}[scale=0.50]
    \vertex (1) at (1,5) {1};
    \vertex (2) at (4,5) {2};
    \vertex (7) at (0,3.5) {7};
    \vertex (3) at (5,3.5) {3};
    \vertex (6) at (-2,2) {6};
    \vertex (4) at (4,2) {4};
    \vertex (5) at (2,1) {5};
    \vertex (9) at (-2,4.5) {0};
    \path[-]
    (9) edge (1)
    (9) edge (3)
    (9) edge (5)
    (9) edge (7)
    (1) edge (2)
    (1) edge (3)
    (1) edge (4)
    (1) edge (5)
    (1) edge (6)
    (1) edge (7)
    (3) edge (2)
    (3) edge (4)
    (3) edge (5)
    (3) edge (6)
    (3) edge (7)
    (5) edge (2)
    (5) edge (4)
    (5) edge (6)
    (5) edge (7)
    (7) edge (2)
    (7) edge (4)
    (7) edge (6);
\end{tikzpicture}
&\hspace*{10mm}
\begin{tikzpicture} [scale=0.70]
    \vertex (5) at (-1,4) {5};
    \vertex (2) at (4,3) {2};
    \vertex (7) at (-1.5,2) {7};
    \vertex (8) at (4,2) {8};
    \vertex (4) at (4,1){4};
    \vertex (6) at (2,5) {6};
    \vertex (1) at (0,4.9){1};
    \vertex (3) at (0.5,-0.2){3};
    \vertex (9) at (2,-0.4) {9};
    \vertex (10) at (3.5,0) {10};
    \vertex (11) at (-1,1) {11};
    \vertex(12) at (3.2,4) {0};
    \path[-]
    (1) edge (10)
    (1) edge (2)
    (1) edge (3)
    (1) edge (4)
    (1) edge (5)
    (1) edge (6)
    (1) edge (7)
    (1) edge (8)
    (1) edge (9)
    (1) edge (12)
    (1) edge (11)
    (5) edge (10)
    (5) edge (2)
    (5) edge (3)
    (5) edge (4)
    (5) edge (6)
    (5) edge (7)
    (5) edge (8)
    (5) edge (9)
    (5) edge (12)
    (5) edge (11)
    (7) edge (10)
    (7) edge (2)
    (7) edge (3)
    (7) edge (4)
    (7) edge (6)
    (7) edge (8)
    (7) edge (11)
    (7) edge (9)
    (7) edge (12)
    (11) edge (10)
    (11) edge (2)
    (11) edge (3)
    (11) edge (4)
    (11) edge (6)
    (11) edge (8)
    (11) edge (9)
    (11) edge (12)
     (2) edge (3)
     (2) edge (9)
     (4) edge (3)
     (4) edge (9)
     (8) edge (9)
     (8) edge (3)
     (10) edge (3)
     (10) edge (9);
    \end{tikzpicture}
\end{tabular}
\caption{The generating graphs $\E_3,\E_6, \E_{8}\text{ and } \E_{12}.$}
\end{figure}
\vglue 2mm
\end{ex}
Some graph properties of $\E_n$ are discussed in various papers~\cite{diameter,generating, planar}, where general results for groups are provided. For completeness and accessibility, we include proofs here, which follow trivially from our approach.

For $a \in U(n) \subseteq V,$ we have $\deg(a) = n-1,$ and the elements of $U(n)$ form a clique of size $\varphi(n).$ Specifically, if $n = p$ (a prime), then $U(n) = V \setminus \{0\},$ and the degree of the identity element is also $n-1,$ making it indistinguishable in the graph. Hence, $\E_p$ is isomorphic to $K_p.$ All vertices in $U(n),$ corresponding to the equivalence class $[d_i] = [1],$ share the degree $n-1,$ which is not coincidental. In fact, all vertices within the same equivalence class $[d_i]$ have the same degree. 

By Lemma~\ref{lem:class}, no two vertices $a, b \in [d_i]$ with $d_s \neq 1$ are adjacent, and their neighborhoods are identical, {\em i.e.}, $N(a) = N(b),$ where $N(x)$ denotes the neighborhood of $x.$ Thus, there are at most $2^r$ distinct vertex degrees. For any $x \in V,$ we find $\deg(x) = \varphi(n)$ if and only if $x$ belongs to the class $[n_0].$ Furthermore, for $x \in [d_i]$ where $d_i \neq 1, n_0,$ the degrees satisfy $\deg(0) < \deg(x) < \deg(1).$ Consequently, the maximum degree $\Delta(\E_n) = n-1,$ and the minimum degree $\delta(\E_n) = \varphi(n).$

For example, if $n = p^k$ with $p$ prime and $k \geq 2,$ then for $x \in V,$ $\deg(x) \in \{(p-1)p^{k-1}, p^k-1\}.$ To illustrate, consider $n = 30.$ 
 \begin{ex}
 Let \(n = 30 = 2 \cdot 3 \cdot 5.\) Then, the vertex set of \(\H\) is \(V(\H) = \{1, 2, 3, 5, 6, 10, 15, 30\}.\) To understand the adjacency of vertices in \(\H\), the table below illustrates the structure of \(\E_{30}.\)
 \begin{figure}[!ht]
    \centering
\begin{tikzpicture}[scale=0.7]
    \vertex (1) at (1,5) {\textcolor{red}{[1]}};
    \vertex (2) at (4,5) {\textcolor{red}{[2]}};
    \vertex (7) at (0,3) {\textcolor{red}{[15]}};
    \vertex (3) at (5,3.5) {\textcolor{red}{[3]}};
    \vertex (6) at (-2,2) {\textcolor{red}{[10]}};
    \vertex (4) at (4,2) {\textcolor{red}{[5]}};
    \vertex (5) at (1,1.5) {\textcolor{red}{[6]}};
    \vertex (8) at (-2,4.5) {\textcolor{red}{[30]}};
    \path[-]
    (8) edge (1)
    (1) edge (2)
    (1) edge (3)
    (1) edge (4)
    (1) edge (5)
    (1) edge (6)
    (1) edge (7)
    (7) edge (2)
    (3) edge (2)
    (4) edge (2)
    (3) edge (4)
    (3) edge (6)
    (5) edge (4);
;
\end{tikzpicture}
\caption{ The graph $\H$ for $\E_{30}.$}
\end{figure}
\vglue 2mm
\begin{table}[!ht]
\centering
\begin{tabular}{|c|c|c|c|c|}
 \hline
 $s$ & $d_s$ & $[d_s]$ & $N(d_s)$& $\deg(d_s)$\\
 \hline
 000&1&$\{1,7,11,13,17,19,23,29\}$&$[1]\cup [2]\cup [3]\cup [5]\cup[6]\cup[10]\cup [15]$ & $29$\\
 \hline
 100&2&$\{2,4,8,14,16,22,26,28\}$&$[1]\cup [3]\cup [5]\cup [15]$& $15$\\
 \hline
 010&3&$\{3,9,21,27\}$&$[1]\cup [2]\cup [5]\cup [10]$& $20$\\
 \hline
 001&5&$\{5,25\}$&$[1]\cup [2]\cup [3]\cup [6]$&$24$\\
 \hline
 110&6&$\{6,12,18,24\}$&$[1]\cup [5]$&$10$\\
 \hline
 101&10&$\{10,20\}$&$[1]\cup [3]$&$12$\\
 \hline
 011&15&$\{15\}$&$[1]\cup [2]$& $16$\\
 \hline
 111&30&\{0\}&[1]&8\\
 \hline
 \end{tabular}
 \caption{The vertex class distribution and degrees of $\E_{30}.$}
 \end{table}
\end{ex}
To determine the degree of a vertex in \( \E_n \), it is sufficient to calculate the degree of \( d_s \) corresponding to each binary sequence \( s .\) For \( s = (0, 0, \dots, 0) \), the degree is given by \( \deg(d_s) = n - 1 .\) The following theorem provides a general formula for computing the degree of a vertex.
\begin{theorem}
In the graph $\E_n,$ the vertex degree corresponding to every binary sequence $s\neq (0\,0\,\dots\,0)$ is given by $$\deg(d_s)=\frac{n}{d_s}\varphi(d_s).$$
\end{theorem}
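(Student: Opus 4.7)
My plan is to count neighbors of a fixed vertex $a \in [d_s]$ by translating the generating condition into a coprimality condition with $d_s$, and then invoke a standard counting fact about residues modulo a divisor of $n$.

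First, I would observe that $b \in V$ is adjacent to $a$ in $\E_n$ precisely when $\langle a,b\rangle = \Z_n$, equivalently when $\gcd(a,b,n) = 1$. Since $a \in [d_s]$ means $\gcd(a,n_0) = d_s$, the set of primes dividing both $a$ and $n$ is exactly the set of primes dividing $d_s$. Therefore $\gcd(a,b,n) = 1$ if and only if no prime divisor of $d_s$ divides $b$, i.e.\ $\gcd(b,d_s) = 1$. This reduces the degree computation to counting residues in $\{0,1,\dots,n-1\}$ coprime to $d_s$.

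Next, since $d_s \mid n_0 \mid n$, I would use the standard fact that in any block of $d_s$ consecutive integers each residue class mod $d_s$ appears exactly once, so in $\{0,1,\dots,n-1\}$ each such class appears exactly $n/d_s$ times. Summing over the $\varphi(d_s)$ residue classes coprime to $d_s$ gives exactly $\tfrac{n}{d_s}\varphi(d_s)$ candidates for $b$. Finally I need to check that the vertex $a$ itself is not among these candidates (so that I am not overcounting by a loop): since $s \neq (0,0,\dots,0)$ we have $d_s > 1$, and $d_s \mid a$ forces $\gcd(a,d_s) = d_s \neq 1$, so $a$ fails the coprimality condition and is excluded automatically. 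Therefore $\deg(d_s) = \tfrac{n}{d_s}\varphi(d_s)$, and by Lemma~\ref{lem:class} this value depends only on the class, giving the degree of every vertex in $[d_s]$.

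I do not expect a real obstacle here; the only subtle points are verifying that $d_s$ captures exactly the ``bad'' primes of $a$ (which follows from $\gcd(a,n)$ and $\gcd(a,n_0)$ having the same prime support) and confirming that $a$ itself need not be subtracted. A quick sanity check against the table for $n=30$ (e.g.\ $d_s=6$ gives $\tfrac{30}{6}\varphi(6) = 10$, matching $\deg(6) = 10$) confirms the formula.
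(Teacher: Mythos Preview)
Your proof is correct and follows essentially the same route as the paper's ``direct approach'': reduce the adjacency condition for a vertex in $[d_s]$ to the coprimality condition $\gcd(b,d_s)=1$, then count such $b$ in $\{0,\dots,n-1\}$ via the block structure modulo $d_s$. Your version is slightly more explicit (you work with a general $a\in[d_s]$ rather than the representative $d_s$ itself, and you verify that $a$ is automatically excluded from the count), but the core argument is identical.
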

\begin{proof}
Computing $\deg(d_s)$ follows from the fact that if $x\in [d_s]$ and $y\in [d_t],$ then $x$ and $y$ adjacent if and only if $\gcd(d_s,d_t)=1.$ Hence
$$\deg(d_s)=\varphi(n)+\sum\limits_{1\neq d_t\in D\left(\frac{n_{0}}{d_{s}}\right)}|[d_t]|,$$ where $D(k)$ denotes the set of divisors of $k.$ On the other hand, a direct approach will give $\deg(d_s)$ in a more compact form.
Let $x\in V$ be adjacent to $d_s\ne 1.$ Then $\gcd(x,d_s)\in U(n)$ and we can write $x=d_su+r,$ where $u\in \N\cup \{0\}$ and $r\in U(d_s).$ Hence 
\begin{equation}\label{eq:deg}
    \deg(d_s)=|\{x\in V|\gcd(x,d_s)\in U(n)\}|=\frac{n}{d_s}\varphi(d_s).
\end{equation}
\end{proof}
\begin{cor}
    The number of edges in a graph $\E_n$ is given by 
    $$|E|=\frac{\varphi(n)}{2}\left(\frac{n}{n_0}\sigma(n_0)-1\right),$$
    where $\sigma(n_0)$ denotes the sum of divisors of $n_0.$
\end{cor}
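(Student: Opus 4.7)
The plan is to apply the handshake lemma $2|E| = \sum_{v \in V} \deg(v)$, grouping vertices by their equivalence class $[d_s]$ with $d_s \mid n_0$, and then combining the degree formula just proved with the class-size formula from the first theorem of Section~\ref{sec:structure}.

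First I would isolate the contribution of $[1] = U(n)$: this class has $\varphi(n)$ vertices, each of degree $n - 1$, contributing $\varphi(n)(n-1)$. For each divisor $d_s \neq 1$ of $n_0$, every vertex of $[d_s]$ has degree $\frac{n}{d_s}\varphi(d_s)$, and $|[d_s]| = \frac{n}{n_0}\varphi\!\left(\frac{n_0}{d_s}\right)$, so the total degree contributed by $[d_s]$ is $\frac{n^2}{n_0 d_s}\,\varphi(d_s)\varphi\!\left(\frac{n_0}{d_s}\right)$.

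The key simplification is that $n_0$ is squarefree, so $\gcd(d_s, n_0/d_s) = 1$ for every divisor $d_s$ of $n_0$; by multiplicativity of $\varphi$, this gives $\varphi(d_s)\varphi(n_0/d_s) = \varphi(n_0)$ independent of $d_s$. Summing over $1 \neq d_s \mid n_0$ reduces the nontrivial contribution to $\frac{n^2 \varphi(n_0)}{n_0}\sum_{1 \neq d_s \mid n_0} \frac{1}{d_s}$. Then I would use the standard identity $\sum_{d \mid n_0}\frac{1}{d} = \frac{\sigma(n_0)}{n_0}$ (obtained by the substitution $d \mapsto n_0/d$), and the identity $\varphi(n) = \frac{n}{n_0}\varphi(n_0)$, which together yield $\frac{n^2\varphi(n_0)}{n_0} = n\varphi(n)$ and $\sum_{1 \neq d_s \mid n_0}\frac{1}{d_s} = \frac{\sigma(n_0) - n_0}{n_0}$.

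Putting the pieces together, $2|E| = \varphi(n)(n-1) + n\varphi(n)\,\frac{\sigma(n_0) - n_0}{n_0} = \varphi(n)\!\left[\frac{n\,\sigma(n_0)}{n_0} - 1\right]$, which is the desired formula after dividing by $2$. The main (minor) obstacle is really just bookkeeping: recognizing the squarefree trick $\varphi(d_s)\varphi(n_0/d_s) = \varphi(n_0)$ that collapses a sum indexed by $2^r$ divisors into a single factor, after which the identity $\sum_{d \mid n_0} 1/d = \sigma(n_0)/n_0$ finishes the job.
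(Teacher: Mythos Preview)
Your proposal is correct and follows essentially the same approach as the paper: both apply the handshake lemma, split off the $[1]=U(n)$ contribution, use the multiplicativity $\varphi(d_s)\varphi(n_0/d_s)=\varphi(n_0)$ to collapse the sum over nontrivial divisors, and finish with the identity $\sum_{d\mid n_0}\frac{1}{d}=\frac{\sigma(n_0)}{n_0}$. If anything, you make the squarefree/multiplicativity step more explicit than the paper does.
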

\begin{proof}
Recall that if $d_s=1,$ then $\deg{(d_s)}=n-1.$ For $d_s\neq 1,$ the $\deg(d_s)=\frac{n}{d_s}\varphi(d_s).$ We already computed the size of an equivalence class, that is, $[d_s]=\frac{n}{n_0}\varphi(\frac{n_0}{d_s}).$ \\ Thus we have
  \begin{align*}
    |E|&=\frac{1}{2}\left(\varphi(n)(n-1)+\sum\limits_{d_s\neq 1}\frac{n}{n_0}\varphi\left(\frac{n_0}{d_s}\right)\frac{n}{d_s}\varphi(d_s)\right).\\
    &=\frac{1}{2}\left(\varphi(n)(n-1)+\sum\limits_{d_s\neq 1}\frac{n}{n_0}\frac{n}{d_s}\varphi(n_0)\right)\\
    &=\frac{1}{2}\left(\varphi(n)(n-1)+\sum\limits_{d_s\neq 1}\frac{n}{d_s}\varphi(n)\right)\\
    &=\frac{\varphi(n)}{2}\left((n-1)+n\sum\limits_{d_s\neq 1}\frac{1}{d_s}\right)\\
    &=\frac{\varphi(n)}{2}\left((n-1)+\frac{n}{n_0}(\sigma(n_0)-n_0)\right)\\
    &=\frac{\varphi(n)}{2}\left(\frac{n}{n_0}\sigma(n_0)-1\right).
\end{align*}
\end{proof}
The following result summarizes all the graph properties of $\E_n.$ We denote the diameter of $\E_n$ by $\text{ diam}(\E_n),$ clique number by ${\omega}(\E_n)$ and the chromatic number by $\chi(\E_n).$ As mentioned earlier, $\omega(n)$ represents the number of distinct prime divisors of $n.$ 
\begin{pro}
    The graph properties of $\E_n$ are as follows.  
    \begin{enumerate}
    \item The graph is connected with diameter $2$ if $n$ is not prime, and diameter $1$ if $n$ is prime.  
    \item The graph is regular if and only if $n$ is prime.  
    \item The graph is bipartite only for $n = 2.$  
    \item  The graph is Hamiltonian for $n > 2.$  
    \item  The graph is Eulerian if and only if $n$ is odd.  
    \item The graph is planar only for $n = 2, 3, 4, 6.$  
    \item Both the chromatic number and clique number are $\varphi(n) + \omega(n).$  
    \item The independence number is $\frac{n}{p_1},$ where $p_1$ is the smallest prime divisor of $n.$  
    \end{enumerate}
\end{pro}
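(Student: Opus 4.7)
The plan is to combine the \(\H\)-join structure of Lemma~\ref{lem:class}---units \(U(n)\) form a clique adjacent to every vertex, while each class \([d_s]\) with \(d_s\neq 1\) is an independent set with a common neighbourhood---with the degree formulas \(\deg(1)=n-1\) and \(\deg(d_s)=(n/d_s)\varphi(d_s)\). Seven of the eight assertions then reduce to short direct arguments, and only the upper bound on the independence number in (8) requires real work.

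For (1), the unit-neighbour property gives \(\mathrm{diam}(\E_n)\le 2\); equality fails exactly when \(\E_n=K_n\), i.e., every non-zero element of \(\Z_n\) is coprime to \(n\), i.e., \(n\) is prime. For (2), \(K_n\) is regular, while for non-prime \(n\) the degrees \(\deg(1)=n-1\) and \(\deg(p_1)=n(1-1/p_1)<n-1\) already differ. For (3), the vertices \(1,2,3\in\Z_n\) are pairwise coprime whenever \(n\geq 3\) and form a triangle, leaving only \(n=2\) bipartite. For (4), the sequential cycle \(0-1-2-\cdots-(n-1)-0\) is Hamilton in \(\E_n\) for every \(n\geq 3\): consecutive integers have \(\gcd 1\), and \(\gcd(n-1,n)=1\) validates the closing edge. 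For (5), \(\deg(1)=n-1\) is even iff \(n\) is odd; and if \(n\) is odd then for \(d_s\neq 1\) both \(n/d_s\) and every prime divisor of \(d_s\) are odd, so \(\varphi(d_s)\) is even and \(\deg(d_s)\) is even. For (7), \(U(n)\cup\{p_1,\dots,p_r\}\) is a clique of size \(\varphi(n)+\omega(n)\) (all pairs coprime), and assigning each unit its own colour and each non-unit the index of its smallest prime divisor of \(n\) is a proper \((\varphi(n)+\omega(n))\)-colouring (two non-units sharing a colour share that prime and are non-adjacent), pinning \(\omega=\chi=\varphi(n)+\omega(n)\).

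For (6), \(n\in\{2,3,4,6\}\) are planar by direct drawing. For every other \(n\) I would supply a Kuratowski obstruction: if \(n\) is prime and \(\geq 5\), then \(\E_n=K_n\supseteq K_5\); if \(n\) is composite and \(\geq 8\), the bounds \(\varphi(n)\geq 4\) and \(n-\varphi(n)\geq n/p_1\geq 3\) embed a \(K_{3,3}\) into the complete bipartite subgraph between units and non-units, since every unit is adjacent to every non-unit.

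The principal obstacle is (8). The lower bound \(\alpha\geq n/p_1\) follows from observing that the multiples of \(p_1\) are pairwise non-adjacent (they share \(p_1\)). For the matching upper bound I would first reduce to the square-free case: since \(\E_n\) is the \((n/n_0)\)-fold blow-up of \(\E_{n_0}\) by independent classes (Lemma~\ref{lem:class}), one has \(\alpha(\E_n)=(n/n_0)\alpha(\E_{n_0})\). For \(n=n_0=p_1\cdots p_r\), identifying \(\Z_n\) with \(\prod_i\Z_{p_i}\) via CRT and attaching to each \(x\) the set \(P(x)=\{i:p_i\mid x\}\) converts an independent set \(I\) into a pairwise-intersecting family \(\mathcal{F}=\{P(x):x\in I\}\) of non-empty subsets of \([r]\), with \(|I|=\sum_{S\in\mathcal{F}}|[d_S]|\). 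A standard shift/compression argument (replacing \(Z\in\mathcal{F}\) with \(j\in Z,\, 1\notin Z\) by \((Z\setminus\{j\})\cup\{1\}\), whose weight ratio \((p_j-1)/(p_1-1)\geq 1\) is \(\geq 1\) and which preserves the intersecting property in the situations that count) then shows the weighted sum is maximised at the star \(\{S:1\in S\}\), which evaluates to \(n_0/p_1\); combined with the blow-up reduction this yields \(\alpha(\E_n)\leq n/p_1\).
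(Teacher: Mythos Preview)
Your arguments for (1)--(5) and (7) match the paper's almost verbatim (it compares $\deg 1$ with $\deg 0$ rather than $\deg p_1$ in (2), and exhibits the triangle on $U(n)\cup\{0\}$ rather than on $\{1,2,3\}$ in (3), but these are cosmetic). For (6) the paper is slightly slicker: since $\varphi(n)\ge 4$ for every $n\notin\{2,3,4,6\}$, the set $U(n)\cup\{0\}$ already induces a $K_5$, so no prime/composite case split is needed.

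The substantive divergence is (8). The paper only \emph{asserts} the upper bound: it lists the multiples of each prime $p\mid n$ as independent sets and declares the one for $p_1$ to be largest, without ever excluding independent sets of any other shape. Your reduction-plus-compression plan therefore does real work the paper omits, but two steps need tightening. First, ``blow-up by independent classes'' is not literally correct, since the class $[1]$ is blown up to a clique (Lemma~\ref{lem:class} says each class is complete \emph{or} empty); the identity $\alpha(\E_n)=(n/n_0)\,\alpha(\E_{n_0})$ survives because any independent set of size $\ge 2$ must avoid units entirely, but that sentence should be said. Second, the shift you describe does not by itself terminate at the star: the family $\{\{1,2\},\{1,3\},\{2,3\}\}$ on $[3]$ is stable under every $j\to 1$ shift yet contains a set missing $1$. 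What you are really invoking is the biased-measure intersecting-family bound (for a product measure with marginals $q_i=1/p_i\le\tfrac12$, every intersecting family has measure at most $\max_i q_i=1/p_1$); this is true and can be proved by combining up-closure, shifting, and induction on $r$, but it is a lemma in its own right rather than a one-line compression.
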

\begin{proof}
\begin{enumerate}
    \item For each $x \in V,$ it is clear that $x$ is adjacent to $1,$ implying that $\E_n$ is connected with the diameter at most $2.$ If $n = p$ (prime), $\text{diam}(\E_n) = 1.$ However, when $n$ is composite, any $1\neq d \mid n$ is not adjacent to $0.$  Therefore, the diameter is exactly 2 in that case.
    \item Note that $\deg(1)\neq \deg(0)$ unless $n$ is a prime. Thus, $\E_n$ is regular if and only $n$ is a prime.
    \item We only need to check the case when $\varphi(n)=2.$ Since $\varphi(n)=2$ if and only if $n\in \{3,4,6\}$ and in all the cases, the graph contains a triangle by $U(n)\cup \{0\}.$ Thus, $\E_n$ is a bipartite graph if and only if $n=2.$ 
    \item It is easy to see that $\{i,i+1\}\in E $ for each $i\in V.$ Thus, $0\backsim 1\backsim 2\backsim \ldots\backsim n-1\backsim 0$ forms a Hamiltonian cycle. 
    \item If $n$ is even, then $\deg(1)$ is odd. For odd $n,$ we have $d_s \neq 2,$ so $\varphi(d_s)$ is always even. Since $\deg(d_s) = \frac{n}{d_s} \varphi(d_s),$ it is always even. Therefore, $\E_n$ is an Eulerian graph if and only if $n$ is odd.
    \item It is clear that $\E_n$ is a planar graph if $n\in \{2,3,4,6\}.$ For the remaining values of $n,$ since $\varphi(n)\geq 4$ hence $\E_n$ contains $K_5$ as a subgraph, for example the subgraph given by the vertices $U(n)\cup \{0\}$ forms $K_5.$ Thus $\E_n$ is a planar graph if and only if $n=2,3,4,6.$  
    \item First note that $\omega(\E_n)\leq \chi(\E_n).$  Clearly, $U(n)\cup \{p_1,p_2,\ldots,p_r\}$ forms a clique of size $\varphi(n)+\omega(n).$ Thus, $\varphi(n)+\omega(n)\leq \omega(\E_n).$ On the other hand, let $p$ be a prime divisor of $n.$ Then all the vertices of an equivalence clases $[d_s],$ where $p\mid d_s$ can be colored with the same vertex color as $p.$ Hence the $\chi(\E_n)\le \varphi(n)+\omega(n).$ Thus, $\omega(\E_n)=\chi(\E_n)=\varphi(n)+\omega(n).$ As a consequence, one can observe that $\E_n$ are perfect graphs.
    \item If $p$ is a prime divisor of $n,$ then $\{px\,|\,0\le x\le \frac{n}{p}-1\}$ forms an independent set. Hence the largest independent set is $\{p_1x|0\le x\le \frac{n}{p_1}-1\},$ where $p_1$ is the least prime. Thus $\al(\E_n),$ the independence number of $\E_n$ is $\frac{n}{p_1}.$
    \end{enumerate}
    \end{proof}

%%%%%%%%%%%%%%%%%%%%%%%%%%%%%%%%%%%%%%%%%%%%%%%%%%%%%%%%%%%%%%%%%%%%%%%%%%%%%%%%%%%%%%%%%%%%%%%%%%%
\section{Adjacency Matrix Spectrum}\label{sec:adjacency}
In this section, we focus on establishing bounds for the eigenvalues of the adjacency matrix of the generating graph of finite cyclic groups. As previously noted, for \(\Z_n\), the generating graph is isomorphic to the co-maximal graph. While the spectra of adjacency for co-maximal graphs have been studied in ~\cite{Add}, however a complete characterization remains unresolved in the general case. In this paper, we use matrix analysis techniques to derive bounds for the eigenvalues of the adjacency matrix for arbitrary \(n\) while providing complete solutions for certain special cases.
\subsection{Preliminaries}
Before addressing the core of the section, we first understand the concept of equitable partition in a graph and the results related to the \(\H\)-join of graphs.

For a given graph $\G,$ a partition of the vertex set $V(\G)$ is said to be equitable if for each $i$ and for all $u,v\in V_i,$ $N(u)\cap V_j =N(v)\cap V_j$ for all $j.$ Then we have the following result.
\begin{theorem}[Theorem 4,~\cite{equibook}]
 If $V_1,V_2,\dots V_t$ is an equitable partition of a graph $\G$ and $t_{ij}=|N(v)\cap V_j|$ for all $v\in V_i$ and $T^A$ is the matrix $[t_{ij}],$ called the quotient matrix of the graph, then the characteristic polynomial of $T^A$ divides $\phi_A(x),$ the characteristic polynomial of the adjacency matrix $A.$
\end{theorem}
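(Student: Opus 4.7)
The plan is to exhibit an $A$-invariant subspace of $\mathbb{R}^n$ of dimension $t$ on which the action of $A$ is represented by a matrix similar to $T^A$. Let $P$ be the $n\times t$ \emph{characteristic matrix} of the partition $V_1,\dots,V_t$, defined by $P_{ij}=1$ if vertex $i$ lies in $V_j$ and $0$ otherwise. The equitable-partition hypothesis---that every vertex $v\in V_i$ has exactly $t_{ij}$ neighbours in $V_j$---translates directly into the fundamental matrix identity
$$AP = P\,T^A.$$
This is the engine of the whole argument, and the first step is simply to verify this identity entry-by-entry: the $(v,j)$-entry of $AP$ counts the neighbours of $v$ in $V_j$, while the $(v,j)$-entry of $PT^A$ equals $t_{i,j}$ where $v\in V_i$.

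Next, I would observe that the columns of $P$ are linearly independent (in fact pairwise orthogonal, since the parts are disjoint), so $\mathcal{U}:=\mathrm{col}(P)$ is a $t$-dimensional subspace of $\mathbb{R}^n$. The identity $AP=PT^A$ says that $A$ sends each column of $P$ into $\mathcal{U}$, so $\mathcal{U}$ is $A$-invariant; furthermore, with respect to the basis of $\mathcal{U}$ consisting of the columns of $P$, the restriction $A|_{\mathcal{U}}$ is represented by $T^A$ itself.

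To extract the divisibility statement, I would choose an orthonormal basis of $\mathbb{R}^n$ whose first $t$ vectors span $\mathcal{U}$ (for instance, normalize the pairwise orthogonal columns of $P$ and extend). In this basis the matrix $A$ takes block upper-triangular form
$$\begin{pmatrix} M & * \\ 0 & N \end{pmatrix},$$
where $M$ represents $A|_{\mathcal{U}}$ in the new (orthonormal) basis of $\mathcal{U}$ and so is similar to $T^A$. The characteristic polynomial of a block-triangular matrix factors through its diagonal blocks, giving
$$\phi_A(x) = \phi_M(x)\,\phi_N(x) = \phi_{T^A}(x)\,\phi_N(x),$$
which is precisely the desired divisibility.

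This is a classical lemma, so no real obstacle is expected. The one point that needs care is that $T^A$ is not generally symmetric, so the conclusion cannot be read off from a spectral decomposition of $A$ directly; the invariant-subspace/block-triangular argument above sidesteps this cleanly and does not require $T^A$ to be diagonalizable.
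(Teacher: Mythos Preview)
Your argument is correct and is in fact the standard proof of this classical lemma: the characteristic matrix $P$, the intertwining identity $AP = PT^A$, the $A$-invariance of $\mathrm{col}(P)$, and the resulting block-triangular form are exactly the usual ingredients. Note, however, that the paper does not supply its own proof of this statement; it is quoted as a known result from the cited reference, so there is nothing to compare against beyond confirming that your write-up matches the canonical argument.
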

The concept of an equitable partition of a vertex set in a graph can be generalized to any square matrix. Let \( M \) be an \( n \times n \) square matrix. Suppose the rows and columns of \( M \) can be partitioned into \( k \) subsets \( \{U_1, U_2, \ldots, U_k\} \) such that for each \( i \), the sum of the entries in each block \(M_{U_i, U_j}\)(the submatrix formed by the intersection of rows in \( U_i \) with columns in \( U_j \)) is constant for all \( j .\) In other words, each block \( M_{U_i, U_j} \) has the same row sum. This partition is known as an equitable partition.
The matrix \( \hat{M} \), referred to as the quotient matrix, is defined such that its \( (i, j) \)-th entry represents the constant row sum of the block \( M_{U_i, U_j} .\) A general result states that the characteristic polynomial of \( \hat{M} \) divides the characteristic polynomial of the original matrix \( M .\)
\begin{theorem}[Theorem 7,~\cite{equibook}]\label{thm:ad}
 If $\G_1,\G_2,\dots ,\G_k$ are all $r_i$-regular graphs, then $V(\G_1)\cup V(\G_2)\cup \dots \cup V(\G_k)$ forms an equitable partition of the graph $\H[\G_1,\G_2,\dots,\G_k].$ The quotient matrix $T^A=[t_{ij}]$ is associated with the equitable partition, then the characteristic polynomial of $A$ of the graph $\H[\G_1,\G_2,\dots,\G_k]$ is given by
    $$\phi_A(x)=\phi_{T^A}(x)\prod\limits_{i=1}^{k}\dfrac{\phi_{A_i}(x)}{x-r_i},$$
    where $A_i$ denotes the adjacency matrix of the graph $\G_i.$  
\end{theorem}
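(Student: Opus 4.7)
The plan is to break the proof into two parts: first verify that the proposed partition is equitable, and second, decompose the eigenvalue problem for $A$ into a quotient part and a residual part using the regularity of each $\G_i$.

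For the equitable partition step, I would observe that for any vertex $v \in V(\G_i)$, the number of neighbors of $v$ lying in $V(\G_j)$ is determined entirely by the pair $(i,j)$: if $i=j$, it equals $r_i$ because $\G_i$ is $r_i$-regular; if $i \neq j$ and $\{i,j\} \in E(\H)$, the $\H$-join places a complete bipartite connection, giving $|V(\G_j)|$ neighbors; and if $\{i,j\} \notin E(\H)$, it is $0$. Hence the partition is equitable and the quotient matrix $T^A=[t_{ij}]$ has the explicit entries $t_{ii}=r_i$, $t_{ij}=|V(\G_j)|$ when $\{i,j\}\in E(\H)$, and $t_{ij}=0$ otherwise.

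Next I would write $A$ in block form indexed by the classes: the $(i,i)$-block is $A_i$, and the $(i,j)$-block ($i\neq j$) is either the all-ones matrix $\J_{n_i\times n_j}$ or the zero matrix, according to $\H$. Because each $\G_i$ is $r_i$-regular, the all-ones vector $\on_{n_i}$ is an eigenvector of $A_i$ with eigenvalue $r_i$. Let $W_i$ be the orthogonal complement of $\on_{n_i}$ inside $\R^{n_i}$. For any eigenvector $w\in W_i$ of $A_i$ with eigenvalue $\mu$, the vector $\tilde w$ obtained by padding $w$ with zeros on the other coordinates satisfies $A\tilde w = \mu \tilde w$, because every off-diagonal block acting on $w$ produces either $\J w = 0$ (since $w\perp \on_{n_i}$) or $0$. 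This produces $n_i-1$ eigenvectors of $A$ with eigenvalues equal to the spectrum of $A_i$ with one copy of $r_i$ removed, contributing the factor $\phi_{A_i}(x)/(x-r_i)$ to the characteristic polynomial. The orthogonal complement of $\bigoplus_i W_i$ in $\R^{n_1+\cdots+n_k}$ is the $k$-dimensional subspace $U$ spanned by the padded all-ones vectors $\tilde\on_i$ on each class. A direct computation shows that the restriction of $A$ to $U$, written in the basis $\{\tilde\on_1,\ldots,\tilde\on_k\}$, is similar to $T^A$ (the entry $t_{ij}$ is exactly the coefficient of $\tilde\on_i$ in $A\tilde\on_j$ after normalization, and both matrices have the same characteristic polynomial by a standard equitable-partition argument).

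Combining the two invariant subspaces, whose dimensions sum to $(n_1+\cdots+n_k-k)+k$, one gets
$$\phi_A(x) = \phi_{T^A}(x)\prod_{i=1}^{k}\frac{\phi_{A_i}(x)}{x-r_i},$$
as required. The main obstacle I expect is the careful bookkeeping when identifying the restriction of $A$ to $U$ with $T^A$: the two matrices live on spaces of different natural norms (the padded all-ones vectors $\tilde\on_i$ have different lengths $\sqrt{n_i}$), so one must either work with a non-orthonormal basis or apply a diagonal similarity, and then invoke the standard fact that the characteristic polynomial of the quotient matrix of an equitable partition divides $\phi_A(x)$ to obtain the claimed product formula.
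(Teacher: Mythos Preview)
Your argument is correct and is the standard proof of this fact. Note, however, that the paper does not prove this theorem at all: it is quoted as Theorem~7 from the reference~\cite{equibook} and used as a black box, so there is no ``paper's own proof'' to compare against. Your decomposition of $\R^{n_1+\cdots+n_k}$ into $\bigoplus_i W_i$ (eigenvectors of each $A_i$ orthogonal to $\on_{n_i}$, killed by the off-diagonal $\J$-blocks) and the $k$-dimensional span $U$ of the padded constant vectors is exactly the usual route; the only cosmetic point is the one you already flag, namely that the matrix of $A|_U$ in the basis $\{\tilde\on_1,\dots,\tilde\on_k\}$ is $T^A$ itself (not merely similar to it), since $A\tilde\on_j=\sum_i t_{ij}\tilde\on_i$ directly from the block description, so no diagonal rescaling is needed to recover $\phi_{T^A}(x)$.
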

With this foundational result, we can now consider our case, where the adjacency matrix $A$ of $\E_n$ is an $n\times n$ matrix whose entries $a_{ij}$ are given by
    \[a_{ij}=
    \begin{dcases}
 1&\quad\gcd(v_{i},v_{j})\in U(n), \\
 0 &\quad\text{otherwise.}\\
    \end{dcases}
    \]

Since the graphs are simple and undirected, the matrix \(A\) for each \(n \in \N\) is symmetric, with all diagonal entries equal to zero. The matrix \(A\) can be expressed in block form by permuting its rows and columns so that elements belonging to the same equivalence class under the relation \(\sim\) are grouped consecutively. In this arrangement, the matrix takes the following form.

 For $n\in \N,$ the adjacency matrix $A=\Big[[d_{ss'}]\Big]$ of the graph $\E_n,$ where $d_{ss'}$ represents the block matrix corresponding to the classes $[d_{s}]$ and $[d_{s'}],$ is given by 
        \[[d_{ss'}]=
        \begin{dcases}
 J&\quad\gcd(d_{s},d_{s'})=1,\\
 J-I&\quad d_{s}=d_{s'}=1, \\
 0&\quad\text{otherwise},\\
        \end{dcases}
        \]
where $J$ and $0$ are the square matrices of order $\Big|[d_{s}]\Big|\times\Big|[d_{s'}]\Big|,$ all of whose entries are 1 and 0 respectively, and $I$ is the identity matrix of size $\Big|[d_{s}]\Big|\times\Big|[d_{s}]\Big |.$
Let us discuss some specific cases.
\begin{ex}
Let $n=p^e,$ where $p$ is a prime and $e\in \N.$ The partition of the vertex set is $\{[1],[p]\}. $ Thus, there are exactly four blocks in the adjacency matrix. Then, $A$ can be written as follows, with rows and columns are ordered as $[1]<[p]$
\begin{table}[!ht]
    \centering
\begin{tabular}{c|cc}
    &$[1]$&$[p]$\\
    \hline
     $[1]$&$J-I$&$J$\\
    $[p]$&$J$&$0$
\end{tabular},
\end{table}\\
where each block size is with respect to the corresponding classes cardinalities. 
\end{ex}
\begin{ex}
Let $n=p_1^{e_1}p_2^{e_2},$ where $p_1,p_2$ are distinct primes. In this case, we have exactly four equivalence classes, that are $\{[1],[p_1],[p_2],[p_1p_2]\}.$ We choose the order for the rows and columns as $[1]<[p_1]<[p_2]<[p_1p_2],$ then we will have 16 blocks. The adjacency matrix is given by
\begin{table}[!ht]
    \centering
    \begin{tabular}{c|cccc}
        &$[1]$&$[p_1]$&$[p_2]$&$[p_1p_2]$\\
        \hline
       $[1]$& $J-I$&J&J&J\\
       $[p_1]$& $J$&0&J&0\\
        $[p_2]$&$J$&$J$&0&0\\
        $[p_1p_2]$& $J$&0&0&0
    \end{tabular},
\end{table}\\
where each block size is with respect to the respective classes cardinalities. 
\end{ex}
Note that for a composite $n\in \N,$ the matrix $A$ is always a singular matrix. This is followed from Lemma~\ref{lem:class}. Moreover, it is easy to see that each block matrix in $A$ has constant row sum. Thus, the relation we initially defined is an equitable partition of $\E_n.$  
Further the graph $\E_n$ can be seen as an $\H$-join of graphs, using Theorem~\ref{thm:ad}, we have the following result.
\begin{theorem}\label{thm:char}
   The characteristic polynomial of the matrix $A$ of  $\E_n,$ under the equitable partition $\{[d_1=1],[d_2],\dots,[d_{2^r}]\}$ of $V,$ where $d_i's$ are the divisors of $n_0$ is given by
       { $$\phi_A(x)=\phi_{T^A}(x)(x+1)^{\varphi(n)-1}{\prod\limits_{a_i}x^{a_i-1}},$$}
   where {$a_i=\Big|[d_{i+1}]\Big|$} for $1\leq i\leq 2^r-1,$ and $\phi_{T^A}(x)$ is the characteristic polynomial of the quotient matrix $T^A.$
      \end{theorem}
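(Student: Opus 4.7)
The plan is to realize this as a direct application of Theorem~\ref{thm:ad} (the $\H$-join formula for the characteristic polynomial). The graph $\E_n$ has already been written as $\H[\G_1,\G_2,\dots,\G_{2^r}]$, where $\G_i$ is the subgraph induced by the equivalence class $[d_i]$. By Lemma~\ref{lem:class}, $\G_1$ (corresponding to $d_1=1$, i.e.\ $U(n)$) is the complete graph $K_{\varphi(n)}$, hence $(\varphi(n)-1)$-regular, while each $\G_i$ for $i\ge 2$ is an empty graph on $a_{i-1}=|[d_i]|$ vertices, hence $0$-regular. So the regularity hypothesis of Theorem~\ref{thm:ad} is satisfied, and the partition $\{[d_1],\dots,[d_{2^r}]\}$ is equitable.

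Next I would compute the individual factors $\phi_{A_i}(x)/(x-r_i)$ appearing in the product. For $\G_1=K_{\varphi(n)}$, the characteristic polynomial is the standard
\[
\phi_{A_1}(x)=(x-(\varphi(n)-1))(x+1)^{\varphi(n)-1},
\]
so dividing by $x-r_1=x-(\varphi(n)-1)$ leaves $(x+1)^{\varphi(n)-1}$. For each empty graph $\G_i$ ($i\ge 2$) on $a_{i-1}$ vertices, $A_i=0$, so $\phi_{A_i}(x)=x^{a_{i-1}}$ and $r_i=0$, giving the factor $x^{a_{i-1}-1}$.

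Finally I would multiply these factors together with $\phi_{T^A}(x)$, as prescribed by Theorem~\ref{thm:ad}, to obtain
\[
\phi_A(x)=\phi_{T^A}(x)\,(x+1)^{\varphi(n)-1}\prod_{i=1}^{2^r-1}x^{a_i-1},
\]
which is exactly the stated formula once the indexing $a_i=|[d_{i+1}]|$ is inserted. There is essentially no obstacle in this proof; the only substantive checks are (i) that $\G_1$ is complete and the remaining $\G_i$'s are empty (handled by Lemma~\ref{lem:ds}/\ref{lem:class}), and (ii) that the equivalence classes under $\sim$ form an equitable partition, which also follows from Lemma~\ref{lem:class} since every vertex of $[d_i]$ has the same neighbourhood in $\E_n$. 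Once these are in place, the statement is a one-line consequence of Theorem~\ref{thm:ad}.
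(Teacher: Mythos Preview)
Your proposal is correct and is precisely the argument the paper intends: the theorem is stated immediately after Theorem~\ref{thm:ad} and the block description of $A$, with no separate proof, because it is meant to be a direct application of Theorem~\ref{thm:ad} to the decomposition $\E_n=\H[\G_1,\dots,\G_{2^r}]$ established in Section~\ref{sec:graph}. Your identification of $\G_1\cong K_{\varphi(n)}$ (giving the factor $(x+1)^{\varphi(n)-1}$) and of each $\G_i$, $i\ge 2$, as an empty graph on $a_{i-1}$ vertices (giving the factors $x^{a_{i-1}-1}$) matches the paper's setup exactly.
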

      \subsection{Eigenvalue Bounds}
      We will maintain the same notations as previously established. In this subsection, our first objective is to determine the structure of the quotient matrix \( T^A \) for \( \E_n \), as described in Theorem~\ref{thm:char}. To achieve this, it is essential to carefully examine the ordering of the rows and columns. Without loss of generality, let \( p_1 < p_2 < \dots < p_r \) be distinct prime divisors of \( n_0 .\) Now we inductively consider the following ordering of rows and columns of $T^A.$
\begin{itemize}
      \item If \( n_0 = p_1 \), the ordering is \( (1, p_1) .\)  
      \item If \( n_0 = p_1p_2 \), the ordering becomes \( (1, p_1, p_2, p_1p_2) .\)  
      \item If \( n_0 = p_1p_2p_3 \), the ordering extends to \( (1, p_1, p_2, p_1p_2, p_3, p_1p_3, p_2p_3, p_1p_2p_3) .\)  
\end{itemize}    
      In general, for \( n_0 \) with \( r \) distinct prime factors, the first \( 2^{r-1} \) entries of the tuple follow the same ordering as when \( n_0 \) has \( r-1 \) primes. The remaining \( 2^{r-1} \) entries are obtained by multiplying each entry in the first half by \( p_r .\) Without loss of generality, we abbreviated the ordered tuple as $(d_1,d_2,\dots,d_{2^r}).$ Note that for each  $1\leq i\leq 2^r,$ we have the following relation.
      \begin{equation}\label{eq:imp}
        d_id_{2^{r}+1-i}=n_0.
      \end{equation}
      This systematic ordering provides a consistent and structured approach to organizing the rows and columns of the matrix, simplifying further analysis. We will adhere to this sequence for rows and columns unless stated otherwise.

The matrix $T^A,$ with the above ordering of rows and columns, is given by
   {$$T^A=\begin{pmatrix}
      \varphi(n)-1&a_1&a_2&\dots&\dots&a_{2^r-1}\\
      \varphi(n)&0&\theta_{23}&\dots&\dots&\theta_{2(2^r-1)}\\
      \varphi(n)&\theta_{32}&0&\theta_{34}&\dots&\theta_{3(2^r-1)}\\
      \vdots&\vdots&\vdots&\ddots&\vdots\\
      \varphi(n)&\theta_{2^r2}&\theta_{2^r3}&\dots&\dots&0
          \end{pmatrix},$$}
          where {\[ \theta_{ij}=\begin{dcases}
          a_{j-1}&\quad \text{ if } \gcd(d_i,d_j)=1;\\
          0 &\quad \text{ if } \gcd(d_i,d_j)\neq 1.
      \end{dcases}\]}
     and {$a_i=\Big|[d_{i+1}]\Big|$} for $1\leq i\leq 2^r-1.$ Moreover, {$a_1+a_2+\dots +a_{2^r-1}=n-\varphi(n).$}

Let us determine the characteristic polynomial of $A$ for some special cases.
\begin{ex}
    Let $\phi_A(x)$ be the characteristic polynomial of $A$ of the graph $\E_n.$ Then we have 
\begin{enumerate}
	\item When $n=p,$ $\E_n$ is a complete graph. Thus the characteristic polynomial is given by $$\phi_A(x)=(x+1)^{(p-1)}(x-(p-1)).$$ 
	\item When $n=p^e,$ where $e>1,$ then we have
	$$\phi_A(x)=\phi_{T^A}(x)(x+1)^{(\varphi(p^e)-1)} x^{(a_1-1)},$$ where $a_1=p^{e-1}$ and 
	\begin{center}
		$T^A=~\begin{pmatrix}p^e-p^{e-1}-1&p^{e-1}\\p^e-p^{e-1}&0\end{pmatrix}$
	\end{center}
    Thus {$$\phi_{T^A}(x)=x^{2}-(p^e-p^{e-1}-1)x-(p^e-p^{e-1})p^{e-1}.$$}
\item When $n=p_1^{e_1}p_2^{e_2},$ where $p_1,p_2$ are distinct primes, and $e_1+e_2\geq 2.$ 
$$\phi_A(x)=\phi_{T^A}(x)(x+1)^{\phi(n)-1}x^{\sum\limits_{i}(a_i-1)},$$ where $a_1=\frac{n}{n_0}\varphi(p_2), a_2=\frac{n}{n_0}\varphi(p_1)$ and $a_3=\frac{n}{n_0}.$ Thus
$$T^A=\begin{pmatrix} \varphi(n)-1&\frac{n}{n_0}\varphi(p_2)&\frac{n}{n_0}\varphi(p_1)&\frac{n}{n_0}\\
    \varphi(n)&0&\frac{n}{n_0}\varphi(p_1)&0\\
    \varphi(n)&\frac{n}{n_0}\varphi(p_2)&0&0\\
    \varphi(n)&0&0&0
    \end{pmatrix}$$
      $$\phi_{T^A}(x)=x^{4}-(\varphi(n)-1)x^3-\frac{n}{n_0}\varphi(n)(n_0-\varphi(n_0)+1)x^2-\frac{n}{n_0}\varphi(n)\left(\varphi(n)+1\right)x+\left(\frac{n}{n_0}\varphi(n)\right)^2.$$
\end{enumerate}
\end{ex}
Analyzing the matrix directly becomes increasingly challenging as \( n \) increases. However, an alternative approach involves finding a similar matrix that is more manageable. In the paper~\cite{Symm}, a similar result was presented regarding the spectrum of $A$ for the generalized join of graphs, as in Theorem~\ref{thm:ad}. However, in place of $T^A,$ the author associate a symmetric matrix. The symmetric matrix for $A,$ denoted by $Q_n,$ is defined as follows.
$$Q_n=\begin{pmatrix}
    \varphi(n)-1&\rho_{1,2}&\dots &\rho_{1,2^r-1}&\rho_{1,2^r}\\
    \rho_{1,2}&0&\dots&\rho_{2,2^r-1}&\rho_{2,2^r}\\
    \vdots&\vdots&\ddots&\vdots&\vdots\\
    \rho_{1,2^r}&\rho_{2,2^r}&\dots&\rho_{2^r-1,2^r}&0
\end{pmatrix},$$
where $$\rho_{i,j}=\begin{dcases}
    \sqrt{a_{i-1}a_{j-1}} & \text{ if } \gcd(d_i,d_j)=1,\\
    \sqrt{\varphi(n)a_{j-1}} & \text{ if } d_i=1 \text{ and } d_j\neq 1,\\
    \,\,0& \text{otherwise}.
\end{dcases}$$
Now in the following lemma, we will see that both the matrices $Q_n$ and $T^A$ for $A$ are similar.
\begin{pro}
    For a given $n,$ the matrices $T^A$ and $Q_n$ of $A$ are similar.
\end{pro}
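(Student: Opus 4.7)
The plan is to exhibit an explicit diagonal similarity. Since the zero pattern of $T^A$ is symmetric in the sense that $(T^A)_{ij}=0$ if and only if $(T^A)_{ji}=0$ (both conditions are equivalent to $\gcd(d_i,d_j)\neq 1$, with the diagonal being a separate exceptional case), a standard trick is to conjugate by a diagonal matrix whose entries are square roots of the block sizes. Concretely, set
$$D=\text{diag}\left(\frac{1}{\sqrt{\varphi(n)}},\,\frac{1}{\sqrt{a_1}},\,\frac{1}{\sqrt{a_2}},\,\ldots,\,\frac{1}{\sqrt{a_{2^r-1}}}\right),$$
which is invertible because every class $[d_i]$ is non-empty. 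The claim to prove is then $D^{-1}T^A D=Q_n$.

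For any diagonal $D=\text{diag}(\delta_1,\ldots,\delta_{2^r})$ one has $(D^{-1}T^A D)_{ij}=(T^A)_{ij}\,\delta_j/\delta_i$, so the verification reduces to four cases matching the definition of $Q_n$. First, the $(1,1)$ entry is fixed and equals $\varphi(n)-1$. Second, for $i=1$ and $j\ge 2$, $\delta_j/\delta_1=\sqrt{\varphi(n)/a_{j-1}}$ transforms $(T^A)_{1j}=a_{j-1}$ into $\sqrt{\varphi(n)\,a_{j-1}}$, which is $\rho_{1,j}$. Third, for $i\ge 2$ and $j=1$ the ratio $\delta_1/\delta_i=\sqrt{a_{i-1}/\varphi(n)}$ turns $(T^A)_{i1}=\varphi(n)$ into $\sqrt{\varphi(n)\,a_{i-1}}=\rho_{1,i}$, confirming symmetry. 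Finally, for $i,j\ge 2$ with $\gcd(d_i,d_j)=1$, $\delta_j/\delta_i=\sqrt{a_{i-1}/a_{j-1}}$ converts $(T^A)_{ij}=a_{j-1}$ into $\sqrt{a_{i-1}a_{j-1}}=\rho_{i,j}$; if $\gcd(d_i,d_j)\neq 1$ both entries are zero and the conjugation preserves this.

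The conceptual point behind the choice of $D$ is that, for a nonnegative matrix with symmetric support, the diagonal conjugation by $\text{diag}(1/\sqrt{b_i})$, where $b_i$ are the equitable-partition block sizes, always produces the symmetric matrix whose off-diagonal entries are $\sqrt{(T^A)_{ij}(T^A)_{ji}}$. In our setup $(T^A)_{ij}(T^A)_{ji}$ equals $a_{i-1}a_{j-1}$ when $i,j\ge 2$ and $\varphi(n)\,a_{j-1}$ when $i=1$, which are exactly the squares of the entries of $Q_n$. There is no real obstacle here: once the diagonal is written down, the argument is a book-keeping exercise in three or four short cases, and the fact that $(T^A)_{ij}$ for $i\ge 2$ depends only on $j$ (being $a_{j-1}$ or $0$) is what makes the ratios collapse neatly to the prescribed $\rho_{i,j}$. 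The similarity $D^{-1}T^A D=Q_n$ then immediately implies that $T^A$ and $Q_n$ have the same characteristic polynomial, which is what will be used later to leverage the symmetric $Q_n$ for the eigenvalue bounds.
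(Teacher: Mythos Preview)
Your proof is correct and uses exactly the same idea as the paper: a diagonal similarity by the square roots of the block sizes. Your matrix $D$ is precisely $P^{-1}$ for the paper's $P=\text{diag}(\sqrt{\varphi(n)},\sqrt{a_1},\ldots,\sqrt{a_{2^r-1}})$, so $D^{-1}T^AD=PT^AP^{-1}=Q_n$; the paper simply asserts this identity, whereas you spell out the four cases, but the argument is identical.
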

\begin{proof}
    Let $$P=\begin{pmatrix}
        \sqrt{\varphi(n)}&0&0&\dots&0\\
        0&\sqrt{a_1}&0&\dots&0\\
        0&0&\sqrt{a_2}&\dots&0\\
        \vdots&\vdots&\vdots&\ddots&\vdots\\
    0&0&0&\dots&\sqrt{a_{2^r-1}}
    \end{pmatrix}.$$
    The matrix $P$ is clearly invertible, and it is easy to check that $PT^AP^{-1}=Q_n.$ Hence they are similar.
\end{proof}
Let $$\tilde Q_n=Q_n-D,$$ where $D=\text{diag}(-1,0,0,\dots,0).$ Then $Q_n=\tilde Q_n+D.$ We can see that a small perturbation in the matrix $Q_n,$ leads to an interesting matrix whose spectrum is easier to find. We have the following result on the spectrum for the matrix $\tilde Q_n.$ Moreover, the following result shows that the symmetric matrices correspond to $n$ and $n_0$ are relatable. We denote associated symmetric matrix of $\E_n$ and $\E_{n_0}$ by $\tilde Q_{n}$ and $\tilde Q_{n_0},$ respectively. 
\begin{pro}
 The relation between the symmetric matrices is given by
 $$\tilde Q_{n}=\frac{n}{n_0}\tilde Q_{n_0}.$$
\end{pro}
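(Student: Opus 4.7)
The plan is to prove the identity entrywise. Observe first that $\tilde Q_n$ and $\tilde Q_{n_0}$ have the same dimensions and the same row/column indexing: both are indexed by the divisors $d_1=1, d_2, \dots, d_{2^r}$ of $n_0$, since $n$ and $n_0$ share exactly the same prime divisors $p_1,\ldots,p_r$. The zero pattern is dictated purely by coprimality of these $d_i$'s (the condition $\gcd(d_i,d_j)=1$), so the nonzero positions of $\tilde Q_n$ and $\tilde Q_{n_0}$ coincide exactly.

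The core tool is the cardinality formula $|[d_s]| = \frac{n}{n_0}\,\varphi(n_0/d_s)$ from Equation~\ref{eq:csize}, together with the identity $\varphi(n) = \frac{n}{n_0}\,\varphi(n_0)$. Writing $a_i^{(n)} := |[d_{i+1}]|$ for the size of the $(i+1)$-st class in $\E_n$ (and similarly $a_i^{(n_0)}$ for $\E_{n_0}$), these two facts give $a_i^{(n)} = \frac{n}{n_0}\,a_i^{(n_0)}$ for every $i$, and $\varphi(n) = \frac{n}{n_0}\,\varphi(n_0)$. Since $D=\mathrm{diag}(-1,0,\dots,0)$, the matrix $\tilde Q_n$ agrees with $Q_n$ everywhere except that its $(1,1)$ entry is $\varphi(n)$ rather than $\varphi(n)-1$; hence the troublesome $-1$ term is removed precisely in the spot that would have obstructed homogeneous scaling.

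Now I verify each case of the three nontrivial entry types. For the $(1,1)$ entry: $\tilde Q_n[1,1]=\varphi(n)=\frac{n}{n_0}\varphi(n_0)=\frac{n}{n_0}\tilde Q_{n_0}[1,1]$. For the first-row/first-column entries with $d_j\neq 1$:
\[
\tilde Q_n[1,j] \;=\; \sqrt{\varphi(n)\,a_{j-1}^{(n)}} \;=\; \sqrt{\tfrac{n}{n_0}\varphi(n_0)\cdot\tfrac{n}{n_0}a_{j-1}^{(n_0)}} \;=\; \tfrac{n}{n_0}\sqrt{\varphi(n_0)\,a_{j-1}^{(n_0)}} \;=\; \tfrac{n}{n_0}\,\tilde Q_{n_0}[1,j].
\]
For an entry $(i,j)$ with $i,j>1$ and $\gcd(d_i,d_j)=1$:
\[
\tilde Q_n[i,j] \;=\; \sqrt{a_{i-1}^{(n)}\,a_{j-1}^{(n)}} \;=\; \tfrac{n}{n_0}\sqrt{a_{i-1}^{(n_0)}\,a_{j-1}^{(n_0)}} \;=\; \tfrac{n}{n_0}\,\tilde Q_{n_0}[i,j].
\]
All remaining entries (diagonal entries with $i>1$, and off-diagonal entries where $\gcd(d_i,d_j)\neq 1$) are zero in both matrices, so they scale trivially. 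Combining the cases yields $\tilde Q_n = \frac{n}{n_0}\,\tilde Q_{n_0}$.

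There is no real obstacle; the only thing that needs care is confirming that subtracting $D$ is exactly the right correction so that the $(1,1)$ entry scales like everything else (if one worked with $Q_n$ directly, the stray $-1$ would spoil homogeneity). The identification $a_i^{(n)}=\frac{n}{n_0}a_i^{(n_0)}$, valid for all $i\ge 1$ because $|[d_s]|$ carries the factor $\frac{n}{n_0}$ uniformly, is what makes the proof a one-line verification once the entry-by-entry bookkeeping is in place.
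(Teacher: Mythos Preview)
Your proof is correct and follows essentially the same approach as the paper: both argue entrywise, using $a_i=\frac{n}{n_0}\varphi(n_0/d_{i+1})$ from Equation~\ref{eq:csize} and $\varphi(n)=\frac{n}{n_0}\varphi(n_0)$ to factor $\frac{n}{n_0}$ out of each $\rho_{i,j}$ and the $(1,1)$ entry. Your version is slightly more explicit about why the $D$-correction is exactly what makes the scaling homogeneous, but the substance is identical.
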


\begin{proof}
    For the graph $\E_n,$ $\tilde Q_{n}=Q_n-D.$ That is,
    $$\tilde Q_n=\begin{pmatrix}
        \varphi(n)&\rho_{1,2}&\dots &\rho_{1,2^r-1}&\rho_{1,2^r}\\
        \rho_{1,2}&0&\dots&\rho_{2,2^r-1}&\rho_{2,2^r}\\
        \vdots&\vdots&\ddots&\vdots&\vdots\\
        \rho_{1,2^r}&\rho_{2,2^r}&\dots&\rho_{2^r-1,2^r}&0
    \end{pmatrix}$$
 Using Equation~\ref{eq:csize}, we have $a_l=|d_{l+1}|=\frac{n}{n_0}\varphi\left(\frac{n_0}{d_{l+1}}\right).$ Therefore, $$\rho_{i,j}=\begin{dcases}
        \sqrt{a_{i-1}a_{j-1}}=\frac{n}{n_0}\sqrt{\varphi\left(\frac{n_0}{d_i}\right)\varphi\left(\frac{n_0}{d_j}\right)} & \text{ if } \gcd(d_i,d_j)=1,\\
        \sqrt{\varphi(n)a_{j-1}}=\frac{n}{n_0}\sqrt{\varphi(n_0)\varphi\left(\frac{n_0}{d_j}\right)}& \text{ if } d_i=1 \text{ and } d_j\neq 1,\\
        \quad 0& \text{otherwise}.
    \end{dcases}$$
    Thus, we can factor out the scalar \(\frac{n}{n_0}\) and observe that the remaining terms in each entry correspond to the entries in the case when \(n = n_0.\) This establishes the desired relation
   $$\tilde Q_{n}=\frac{n}{n_0}\tilde Q_{n_0}.$$
\end{proof}
Consequently, the core problem of determining eigenvalue bounds for \(A\) of \(\E_n\) reduces to the case when \(n = n_0.\) Furthermore, we only need to figure out the bounds for the matrix \(Q_{n_0}.\) First we see the structure of $\tilde Q_{n_0}$ with respect to the permutation $(d_1\, d_2 \,d_3\,\dots\, d_{2^r}),$ of rows and columns. Then the matrix $\tilde Q_{n_0}$ is given by
\newpage
        \begin{table}[!ht]
         \centering
        \begin{tabular}{c|cccccccc}
            &$[1]$&$[p_1]$&$[p_2]$&$[p_1p_2]$&$[p_3]$&\dots&$[p_2\dots p_r]$&$[p_1p_2\dots p_r]$\\
            \hline
               $ [1]$& $\varphi(n)$&$\rho_{1,2}$&$\rho_{1,3}$&$\rho_{1,4}$&$\rho_{1,5}$&\dots &$\rho_{1,2^r-1}$&$\rho_{1,2^r}$\\

                $[p_1]$&$\rho_{1,2}$&0&$\rho_{2,3}$&$0$&$\rho_{2,5}$&\dots&$\rho_{2,2^r-1}$&0\\

                $[p_2]$&$\rho_{1,3}$&$\rho_{2,3}$&0&$0$&$\rho_{3,5}$&\dots&0&0\\

                $[p_1p_2]$&$\rho_{1,4}$&0&$0$&$0$&$\rho_{4,5}$&\dots&0&$0$\\
                $[p_3]$&$\rho_{1,5}$&$\rho_{2,5}$&$\rho_{3,5}$&$\rho_{4,5}$&0&\dots&0&0\\
                \vdots&\vdots&\vdots&\vdots&\vdots&\vdots&$\ddots$&\vdots&\vdots\\
$[p_2\dots p_r]$&$\rho_{1,2^r-1}$&$\rho_{2,2^r-1}$&0&0&$0$&\dots&0&0\\
              $[p_1p_2\dots p_r]$& $\rho_{1,2^r}$&$0$&$0$&$0$&0&\dots&0 &0 
        \end{tabular}
        \end{table}
In the following result, we determine the spectrum of $\tilde Q_{n_0}.$
\begin{theorem}
    The matrix $\tilde Q_{n_0}=\sqrt {\varphi(n_0)}\Bigg(U_1\bigotimes U_2\bigotimes \dots \bigotimes U_r\Bigg),$ where $
    \bigotimes$ denotes the tensor product of matrices, and $U_i=\begin{pmatrix}
        \sqrt{\varphi(p_{r-{(i-1)}})}&1\\
        1&0
    \end{pmatrix}$ 
with $p_j's$ are primes and $ 1\leq j\leq r.$ 
\end{theorem}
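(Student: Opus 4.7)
The plan is to proceed by induction on the number $r$ of distinct prime factors of $n_0$. The base case $r=1$ is a direct $2 \times 2$ computation: there are only two classes $[1]$ and $[p_1]$, so $\tilde Q_{n_0}$ has entries $\varphi(p_1)$, $\sqrt{\varphi(p_1)}$, $\sqrt{\varphi(p_1)}$, $0$, which is exactly $\sqrt{\varphi(p_1)}\,U_1$.

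For the inductive step, I would write $n_0 = n_0'\,p_r$ with $n_0' = p_1 \cdots p_{r-1}$. The recursive description of the row/column order in the excerpt says that the first $2^{r-1}$ indices are the divisors of $n_0'$ in their own recursive order, while the remaining $2^{r-1}$ indices are $p_r$ times those divisors in the same internal order. This gives $\tilde Q_{n_0}$ a natural $2 \times 2$ block form with $2^{r-1} \times 2^{r-1}$ blocks, which I then identify explicitly. The bottom-right block is zero, because any two divisors both divisible by $p_r$ fail to be coprime and yield $\rho = 0$. The top-left block equals $\varphi(p_r)\,\tilde Q_{n_0'}$: writing $n_0/d = (n_0'/d)\,p_r$ for $d \mid n_0'$ and using multiplicativity of $\varphi$, every formula for $\rho_{i,j}$ (both the $d_i = 1$ case and the $d_i, d_j \neq 1$ case) picks up exactly one factor of $\varphi(p_r)$. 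The off-diagonal blocks equal $\sqrt{\varphi(p_r)}\,\tilde Q_{n_0'}$; the analogous case split here uses $\varphi\bigl(n_0/(p_r d')\bigr) = \varphi(n_0'/d')$, so only one copy of $\varphi(p_r)$ enters the product and it remains under the square root.

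Combining these three identifications gives
\[
\tilde Q_{n_0} \;=\; \begin{pmatrix} \varphi(p_r)\,\tilde Q_{n_0'} & \sqrt{\varphi(p_r)}\,\tilde Q_{n_0'} \\ \sqrt{\varphi(p_r)}\,\tilde Q_{n_0'} & 0 \end{pmatrix} \;=\; \sqrt{\varphi(p_r)}\,\bigl(U_1 \otimes \tilde Q_{n_0'}\bigr),
\]
since $U_1 = \bigl(\begin{smallmatrix}\sqrt{\varphi(p_r)} & 1 \\ 1 & 0\end{smallmatrix}\bigr)$. Applying the inductive hypothesis $\tilde Q_{n_0'} = \sqrt{\varphi(n_0')}\,(U_2 \otimes \cdots \otimes U_r)$ (noting that the tensor factors of the $(r-1)$-case coincide with $U_2, \ldots, U_r$ in the present indexing, because the $i$-th factor is attached to the prime $p_{r-i+1}$) and using $\varphi(n_0) = \varphi(p_r)\,\varphi(n_0')$ (which holds because $n_0$ is square-free), the accumulated scalar becomes $\sqrt{\varphi(n_0)}$, and the claimed factorization follows.

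The main obstacle is purely bookkeeping: one must verify that the recursive tuple $(d_1, d_2, \ldots, d_{2^r})$ prescribed in the excerpt lines up with the standard Kronecker lex order, so that the $k$-th coordinate in the tensor product corresponds to the prime $p_{r-k+1}$ and, in particular, $U_1$ governs the slowest-changing bit (for $p_r$). Once that identification is in place, the block decomposition of $\tilde Q_{n_0}$ exactly matches the Kronecker block structure of $U_1 \otimes (\cdots)$, and every remaining equality reduces to the routine applications of the multiplicativity of $\varphi$ sketched above.
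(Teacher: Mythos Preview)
Your proof is correct and follows essentially the same inductive strategy as the paper: both argue by induction on $r$, use the recursive ordering of divisors to obtain a $2\times 2$ block decomposition of $\tilde Q_{n_0}$ with vanishing bottom-right block, and then recognize the Kronecker structure $U_1\otimes(\cdots)$. Your inductive step is in fact slightly cleaner than the paper's, since you explicitly identify each of the four blocks as a scalar multiple of $\tilde Q_{n_0'}$ via the multiplicativity of $\varphi$, whereas the paper proceeds more by writing out the full tensor product and comparing entries.
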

\begin{proof}
    It is important to note that tensor product of symmetric matrices is symmetric.
    Under the given ordered tuple $(d_1,\, d_2 ,\,\dots,\, d_{2^r})$ of divisors of $n_0,$ we will see that $\tilde Q_{n_0}$ is in the following form.
    $$\tilde Q_{n_0}=\sqrt{\varphi(n_0)}\begin{pmatrix} B_{11}&B_{12}\\
        B_{21}&B_{22}
    \end{pmatrix}=\begin{pmatrix}
        *&*\\
        *&0
    \end{pmatrix},$$
where $*$ represents a non-zero block matrix and 0 represents a zero matrix. First note that for every $i,j$ such that $\rho_{i,j}=\sqrt {a_{i-1} a_{j-1}}$ when $\gcd(d_i,d_j)=1.$ Then clearly, 
\begin{equation}\label{eq:imp2}
    a_{i-1}a_{j-1}=\varphi\left(\frac{n_0}{d_i}\right)\varphi\left(\frac{n_0}{d_j}\right)=\varphi(n_0)\varphi\left(\frac{n_0}{d_id_j}\right).
\end{equation}
Hence we can always take $\sqrt{\varphi(n_0)}$ as a common value. Moreover, the entries in the skew diagonal of $\tilde Q_{n_0},$ {\it i.e.,} 
\begin{equation}\label{eq:imp3}
    \rho_{i,2^{r}+1-i}=\sqrt{\varphi(n_0)}
\end{equation} 
for every $i,$ $1\leq i\leq 2^r.$

 In the tuple $(d_1,\,d_2,\,\dots,\, d_{2^r}),$ we first subdivide it into two subtuples $(d_1,\,d_2,\,\dots, d_{2^{r-1}}),$ and $(d_{2^{r-1}+1},\,\dots\, ,d_{2^r}).$ Then we see that the matrix $\tilde Q_{n_0}$ can be partitioned into four blocks, with $B_{22}$ clearly a zero matrix because $p_r$ is common in the set $\{d_{2^{r-1}+1},\,\dots\, ,d_{2^r}\}.$ 
 Let $B=\begin{pmatrix}
    B_{11}&B_{12}\\
    B_{21}&0
\end{pmatrix}.$
We will apply induction on $r,$ to get an expression for $B$ in the tensor products of $2\times 2$ matrices of type $\begin{pmatrix}
    *&1\\
    1&0
\end{pmatrix}.$ 
We will follow the following inductive steps.
\begin{enumerate}
    \item Let $r=1,$ then $n_0=p_1$ and we have $$B=\begin{pmatrix}
        \sqrt{\varphi(p_1)}&1\\
        1&0
    \end{pmatrix}$$
    Hence $B=U_1.$
    \item For $r=2,$ $n_0=p_1p_2,$ then we have $$B=\begin{pmatrix}
        \sqrt{\varphi(p_1)\varphi(p_2)}& \sqrt{\varphi(p_2)}& \sqrt{\varphi(p_1)}&1\\
        \sqrt{\varphi(p_2)}&0&1&0\\
        \sqrt{\varphi(p_1)}&1&0&0\\
        1&0&0&0
    \end{pmatrix}=
    \begin{pmatrix}
        \sqrt{\varphi(p_2)}&1\\
        1&0
    \end{pmatrix}\otimes \begin{pmatrix}
        \sqrt{\varphi(p_1)}&1\\
        1&0
    \end{pmatrix}=U_1\otimes U_2.$$
\end{enumerate}
Thus by induction hypothesis,
it holds for $r-1.$ For $n_0=p_1p_2\dots p_r,$ the matrix $U_1\otimes U_2\otimes \dots \otimes U_{r}$ 
can be seen as  
$$\begin{pmatrix}
            \sqrt{\varphi(n_0)}&\sqrt{\varphi\left(\frac{n_0}{d_2}\right)}&\sqrt{\varphi\left(\frac{n_0}{d_3}\right)}&\sqrt{\varphi\left(\frac{n_0}{d_4}\right)}&\sqrt{\varphi\left(\frac{n_0}{d_5}\right)}&\dots &\sqrt{\varphi\left(\frac{n_0}{d_{2^{r}-1}}\right)}&\sqrt{\varphi\left(\frac{n_0}{d_{2^r}}\right)}\\

            \sqrt{\varphi\left(\frac{n_0}{d_{2}}\right)}&0&\sqrt{\varphi\left(\frac{n_0}{d_2d_{3}}\right)}&0&\sqrt{\varphi\left(\frac{n_0}{d_2d_{5}}\right)}&\dots&\sqrt{\varphi\left(\frac{n_0}{d_2d_{2^r-1}}\right)}&0\\

            \sqrt{\varphi\left(\frac{n_0}{d_{3}}\right)}&\sqrt{\varphi\left(\frac{n_0}{d_{3}d_2}\right)}&0&0&\sqrt{\varphi\left(\frac{n_0}{d_{3}d_5}\right)}&\dots&0&0\\

            \sqrt{\varphi\left(\frac{n_0}{d_{4}}\right)}&0&0&0&\sqrt{\varphi\left(\frac{n_0}{d_4d_{5}}\right)}&\dots&0&0\\
            \sqrt{\varphi\left(\frac{n_0}{d_{5}}\right)}&\sqrt{\varphi\left(\frac{n_0}{d_5d_{2}}\right)}&\sqrt{\varphi\left(\frac{n_0}{d_5d_{3}}\right)}&\sqrt{\varphi\left(\frac{n_0}{d_5d_{4}}\right)}&0&\dots&0&0\\
      \vdots&\vdots&\vdots&\vdots&\vdots&\ddots&\vdots&\vdots\\
      \sqrt{\varphi\left(\frac{n_0}{d_{2^r-1}}\right)}&\sqrt{\varphi\left(\frac{n_0}{d_{2^r-1}d_2}\right)}&0&0&0&\dots&0&0\\
      \sqrt{\varphi\left(\frac{n_0}{d_{2^r}}\right)}&0&0&0&0&\dots&0 &0 
\end{pmatrix}$$
From Equation~\ref{eq:imp}, each skew diagonal entry is equal to 1. Now we can easily see that
    $$U_1\otimes U_2\otimes \dots \otimes U_{r-1}\otimes U_r=B,$$ where $$U_i=\begin{pmatrix}
        \sqrt{\varphi(p_{r-{(i-1)}})}&1\\
        1&0
    \end{pmatrix}$$ for $1\leq i\leq r.$  
\end{proof}
\begin{cor}
    The eigenvalues of \(\tilde Q_{n_0}\) are given by \(\sqrt{\varphi(n)} \prod\limits_{i=1}^{r} u_{i,l}\), where \(u_{i,l}\) represents an eigenvalue of matrix \(U_i\), with \(l = 1, 2\), as follows.
     $$u_{i,1}=\dfrac{\sqrt{\varphi\left(p_{r+1-i}\right)}+ \sqrt{4+\varphi\left(p_{r+1-i}\right)}}{2} \text{ and } u_{i,2}=\dfrac{\sqrt{\varphi\left(p_{r+1-i}\right)}-\sqrt{4+\varphi\left(p_{r+1-i}\right)}}{2}.$$
\end{cor}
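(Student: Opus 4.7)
The plan is to combine the tensor-product factorization from the preceding theorem with two standard facts from linear algebra: first, that the spectrum of a Kronecker product $A \otimes B$ consists of all products $\lambda \mu$ with $\lambda$ an eigenvalue of $A$ and $\mu$ an eigenvalue of $B$ (and, by iteration, the spectrum of $U_1 \otimes U_2 \otimes \cdots \otimes U_r$ consists of all products $\prod_{i=1}^r u_{i,\ell_i}$ with $\ell_i \in \{1,2\}$), and second, that multiplying a matrix by a scalar $c$ multiplies each of its eigenvalues by $c$.

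The first step is therefore to invoke the preceding theorem to write
\[
\tilde Q_{n_0} \;=\; \sqrt{\varphi(n_0)}\,\bigl(U_1 \otimes U_2 \otimes \cdots \otimes U_r\bigr),
\]
and then to read off, from the Kronecker-product spectral rule, that the eigenvalues of $\tilde Q_{n_0}$ are precisely the numbers $\sqrt{\varphi(n_0)}\,\prod_{i=1}^{r} u_{i,\ell_i}$ as each $\ell_i$ independently ranges over $\{1,2\}$. This gives $2^r$ eigenvalues in total, matching the size of $\tilde Q_{n_0}$.

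The second step is to compute the eigenvalues of the $2 \times 2$ matrix
\[
U_i \;=\; \begin{pmatrix} \sqrt{\varphi(p_{r+1-i})} & 1 \\ 1 & 0 \end{pmatrix},
\]
which has trace $\sqrt{\varphi(p_{r+1-i})}$ and determinant $-1$. Its characteristic polynomial is $\lambda^2 - \sqrt{\varphi(p_{r+1-i})}\,\lambda - 1$, and the quadratic formula gives exactly the two roots claimed as $u_{i,1}$ and $u_{i,2}$. Substituting into the expression from the first step yields the stated description of $\mathrm{spec}(\tilde Q_{n_0})$.

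There is no serious obstacle here; the work is essentially bookkeeping once the tensor-product structure is in hand. The only point that merits a sentence of justification is the Kronecker-spectrum identity, which is a textbook fact (each $U_i$ is diagonalizable since it has distinct real eigenvalues, so the tensor product is simultaneously diagonalizable with eigenvalues equal to the products of the factor eigenvalues). I would conclude by noting that the factor $\sqrt{\varphi(n)}$ appearing in the statement should be read as $\sqrt{\varphi(n_0)}$, consistently with the reduction $\tilde Q_n = \frac{n}{n_0}\tilde Q_{n_0}$ established earlier in the section.
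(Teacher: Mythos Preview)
Your proof is correct and follows essentially the same route as the paper's: invoke the tensor-product factorization $\tilde Q_{n_0}=\sqrt{\varphi(n_0)}\,(U_1\otimes\cdots\otimes U_r)$, apply the standard fact that the spectrum of a Kronecker product is the set of products of the factor eigenvalues, and solve the quadratic for each $U_i$. Your remark that the scalar should read $\sqrt{\varphi(n_0)}$ rather than $\sqrt{\varphi(n)}$ is a valid observation about the statement as written.
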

\begin{proof}
    In the theorem above, we established that \(\tilde{Q}_{n_0}=\sqrt{\varphi(n_0)}\,B\) where $B$ is the tensor product of $U_i's.$ Utilizing the property that the eigenvalues of a tensor product of matrices are the products of the eigenvalues of the individual matrices, it follows that the eigenvalues of \(B\) are the products of the eigenvalues of the \(U_i.\) Since $U_i=\begin{pmatrix}
        \sqrt{\varphi(p_{r-{(i-1)}})}&1\\
        1&0
    \end{pmatrix}$ 
    for $1\leq i\leq (r-1).$ Thus for each $i,$ the eigenvalues of $U_i$ are given by
    $$u_{i,1}=\dfrac{\sqrt{\varphi\left(p_{r+1-i}\right)}+ \sqrt{4+\varphi\left(p_{r+1-i}\right)}}{2} \text{ and } u_{i,2}=\dfrac{\sqrt{\varphi\left(p_{r+1-i}\right)}-\sqrt{4+\varphi\left(p_{r+1-i}\right)}}{2}.$$

\end{proof}
Note that $Q_n$ is a perturbation of the matrix $\tilde Q_n,$ that is, $Q_n=\tilde Q_n-D.$  Since $\tilde Q_n$ and $D$ are non-commuting, thus it is not easy to find the exact eigenvalues of $Q_n,$ however we can use Weyl's inequalities to get the eigenvalue bounds. 
\begin{theorem}[\cite{bhatia}]
   Let $\lambda_1(M)\geq \lambda_2(M)\geq \lambda_3(M)\geq \dots\geq \lambda_m(M)$ be the eigenvalues in the decreasing order of a matrix $M.$ Let $A$ and $B$ be $m\times m$ Hermitian matrices.  Then 
   \begin{align*}
    \lambda_j(A+B)\leq \lambda_i(A)+\lambda_{j-i+1}(B) &\text{ for } i\leq j,\\
   \lambda_j(A+B)\geq \lambda_i(A)+\lambda_{j-i+n}(B)&\text{ for } i\geq j.
   \end{align*}
\end{theorem}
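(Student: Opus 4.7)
The plan is to prove Weyl's inequalities via the Courant--Fischer min-max characterization of the eigenvalues of a Hermitian matrix. Recall that if $M$ is an $m\times m$ Hermitian matrix with eigenvalues $\lambda_1(M)\geq\lambda_2(M)\geq\cdots\geq\lambda_m(M)$, then for every $1\leq k\leq m$,
$$\lambda_k(M)=\min_{\dim V=m-k+1}\;\max_{x\in V,\,\|x\|=1}\langle Mx,x\rangle=\max_{\dim V=k}\;\min_{x\in V,\,\|x\|=1}\langle Mx,x\rangle.$$
Both forms will be used in tandem with invariant subspaces of $A$ and $B$ built from their spectral decompositions; the entire argument is a dimension count followed by a Rayleigh-quotient estimate.

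For the upper bound, fix $i\leq j$ and let $U_A$ be the span of orthonormal eigenvectors of $A$ associated with $\lambda_i(A),\lambda_{i+1}(A),\ldots,\lambda_m(A)$, and let $U_B$ be the analogous span for $B$ associated with $\lambda_{j-i+1}(B),\ldots,\lambda_m(B)$. These have dimensions $m-i+1$ and $m-j+i$ respectively, so the standard inequality $\dim(U_A\cap U_B)\geq\dim U_A+\dim U_B-m$ forces $\dim(U_A\cap U_B)\geq m-j+1$. Any unit vector $x\in U_A\cap U_B$ satisfies $\langle Ax,x\rangle\leq\lambda_i(A)$ and $\langle Bx,x\rangle\leq\lambda_{j-i+1}(B)$ by the spectral decomposition, hence $\langle(A+B)x,x\rangle\leq\lambda_i(A)+\lambda_{j-i+1}(B)$. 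Applying the min-max form of Courant--Fischer to $A+B$, using $U_A\cap U_B$ as a test subspace of dimension at least $m-j+1$, yields $\lambda_j(A+B)\leq\lambda_i(A)+\lambda_{j-i+1}(B)$.

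For the lower bound (reading $n$ as $m$ in the statement), the approach is dual. Assume $i\geq j$ and let $W_A$ be the span of eigenvectors of $A$ for $\lambda_1(A),\ldots,\lambda_i(A)$ and $W_B$ the span for $B$ of $\lambda_1(B),\ldots,\lambda_{j-i+m}(B)$; these have dimensions $i$ and $j-i+m$ respectively, and since $j\leq i\leq m$ both indices are valid. Their intersection has dimension at least $i+(j-i+m)-m=j$, and every unit vector $x$ in it obeys $\langle Ax,x\rangle\geq\lambda_i(A)$ and $\langle Bx,x\rangle\geq\lambda_{j-i+m}(B)$. The max-min form of Courant--Fischer then gives $\lambda_j(A+B)\geq\lambda_i(A)+\lambda_{j-i+m}(B)$. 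Alternatively, the lower bound can be extracted from the upper bound applied to $-A$ and $-B$ via the identity $\lambda_k(-M)=-\lambda_{m-k+1}(M)$. The only genuinely delicate step throughout is the index arithmetic in the dimension count --- choosing the right tails of the spectrum so that the intersection has exactly the size needed to feed back into Courant--Fischer --- but once that bookkeeping is in place the argument is routine.
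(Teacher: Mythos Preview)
Your argument is the standard Courant--Fischer proof of Weyl's inequalities and it is correct: the dimension counts are right, the Rayleigh-quotient bounds on the invariant subspaces are the right ones, and you correctly note that the $n$ in the stated lower bound should be read as $m$.

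There is nothing to compare with, however, because the paper does not prove this theorem at all. It is stated with a citation to Bhatia's book and then used as a black box (specifically, its corollary $\lambda_j(A)+\lambda_m(B)\leq\lambda_j(A+B)\leq\lambda_j(A)+\lambda_1(B)$ is what the authors actually apply). So your proposal supplies a proof where the paper simply imports the result from the literature.
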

\begin{cor}[\cite{bhatia}]\label{cor:bound}
    For each $j=1,2,\dots,m,$ we have the following inequality.
    $$\lambda_j(A)+\lambda_m(B)\leq \lambda_j(A+B)\leq \lambda_j(A)+\lambda_1(B).$$
\end{cor}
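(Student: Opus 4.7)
The plan is to obtain this corollary as an immediate consequence of Weyl's inequalities by making a particularly convenient choice of the index $i$. Since Weyl's inequalities give a whole family of estimates parameterized by $i$ (with the constraint $i \leq j$ for the upper-bound version and $i \geq j$ for the lower-bound version), the value $i = j$ is admissible in both, and it is this common choice that produces the symmetric bound claimed in the corollary.

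Concretely, I would first establish the upper bound. Taking $i = j$ in the inequality
\[
\lambda_j(A+B) \leq \lambda_i(A) + \lambda_{j-i+1}(B), \qquad i \leq j,
\]
reduces the right-hand side to $\lambda_j(A) + \lambda_{1}(B)$, since $j - i + 1 = 1$. This yields exactly the right half of the claim. For the lower bound, I would take $i = j$ in
\[
\lambda_j(A+B) \geq \lambda_i(A) + \lambda_{j-i+m}(B), \qquad i \geq j,
\]
so that $j - i + m = m$, giving $\lambda_j(A+B) \geq \lambda_j(A) + \lambda_m(B)$. Combining the two produces the double inequality in the statement.

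Since the corollary is essentially a direct specialization of a previously stated result, there is no substantive obstacle in the argument. The only point requiring minor care is verifying that the admissibility conditions $i \leq j$ and $i \geq j$ in Weyl's two inequalities are \emph{simultaneously} met by the choice $i = j$, which is immediate. No further structural properties of $A$, $B$, or of the generating graph $\E_n$ are invoked.
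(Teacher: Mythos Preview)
Your proposal is correct: choosing $i=j$ in both of Weyl's inequalities immediately gives the two-sided bound, and you correctly verify that this choice is admissible in each direction. The paper itself does not supply a proof of this corollary at all --- it simply cites it from \cite{bhatia} --- so there is nothing to compare against; your derivation from the preceding theorem is exactly the standard argument and is what the cited reference does.
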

\begin{theorem}
   The eigenvalue bounds for $Q_n$ is given by 
   $$\lambda_j(\tilde Q_n)+\lambda_n(D)\leq \lambda_j( Q_n)\leq \lambda_j(\tilde Q_n)+\lambda_1(D),$$ where $\tilde Q_n=Q_n-D,$ and $D=\text{diag}(-1,0,0,\dots,0).$
\end{theorem}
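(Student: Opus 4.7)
The proof plan is a direct application of the Weyl-type corollary just stated (Corollary~\ref{cor:bound}), with $A = \tilde Q_n$ and $B = D$. First I would verify that its hypotheses hold. The matrix $Q_n$ is symmetric by construction, since $\rho_{i,j}$ depends only on the unordered pair $\{d_i,d_j\}$ (through $\gcd(d_i,d_j)$ and the class sizes $a_{i-1}, a_{j-1}$), and $D = \text{diag}(-1,0,\dots,0)$ is diagonal with real entries. Consequently $\tilde Q_n = Q_n - D$ is also real symmetric, so both $\tilde Q_n$ and $D$ are Hermitian as required by the corollary.

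Next I would invoke the trivial identity $Q_n = \tilde Q_n + D$, which is immediate from the very definition $\tilde Q_n := Q_n - D$. Applying Corollary~\ref{cor:bound} to the pair $(\tilde Q_n, D)$ then yields, for each index $j = 1, 2, \dots, 2^r$,
$$\lambda_j(\tilde Q_n) + \lambda_{2^r}(D) \le \lambda_j(\tilde Q_n + D) \le \lambda_j(\tilde Q_n) + \lambda_1(D),$$
and substituting $\tilde Q_n + D = Q_n$ produces exactly the claimed inequality.

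As a concrete bookkeeping step I would also record the spectrum of $D$ explicitly. Its eigenvalues, in non-increasing order, are $0$ with multiplicity $2^r - 1$ and $-1$ with multiplicity $1$, so $\lambda_1(D) = 0$ and $\lambda_{2^r}(D) = -1$. Plugging this into the bound yields the sharper form $\lambda_j(\tilde Q_n) - 1 \le \lambda_j(Q_n) \le \lambda_j(\tilde Q_n)$. Combining with the preceding tensor-product theorem --- which furnishes the closed expression $\lambda_j(\tilde Q_n) = \tfrac{n}{n_0}\sqrt{\varphi(n_0)} \prod_{i=1}^{r} u_{i,l_i}$ for appropriate choices of $l_i \in \{1,2\}$ --- converts these bounds into fully explicit numerical enclosures for the spectrum of $Q_n$ (and hence, through the similarity $Q_n \sim T^A$, for the nontrivial part of the spectrum of $A$ arising from the $\H$-join decomposition).

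I should be candid that there is no substantive obstacle in this theorem itself: it is a one-line corollary of Weyl's inequality, modulo the easy symmetry and additivity checks above. The real mathematical content driving the bound lies upstream, in the tensor-product diagonalization $\tilde Q_{n_0} = \sqrt{\varphi(n_0)} \, U_1 \otimes \cdots \otimes U_r$ and the scaling relation $\tilde Q_n = \tfrac{n}{n_0}\tilde Q_{n_0}$, which together make the right-hand side $\lambda_j(\tilde Q_n)$ a computable quantity rather than an abstract placeholder.
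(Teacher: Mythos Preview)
Your proposal is correct and follows essentially the same approach as the paper: both apply Corollary~\ref{cor:bound} directly to the decomposition $Q_n = \tilde Q_n + D$ and then read off $\lambda_1(D)=0$, $\lambda_{2^r}(D)=-1$. Your version is in fact more careful, explicitly verifying the Hermitian hypothesis and correctly indexing the smallest eigenvalue of $D$ as $\lambda_{2^r}(D)$ rather than the paper's $\lambda_n(D)$.
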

\begin{proof}
We know that the eigenvalues of $D,$ that is,
$0\geq0\geq\dots \geq 0\geq -1.$ Then by the above result, we have the following inequality
$$\lambda_j(\tilde Q_n)-1\leq \lambda_j(Q_n)\leq \lambda_j(\tilde Q_n).$$
\end{proof}
To understand the above discussion, Let us illustrate it in the following example.
\begin{ex}
    Let $n=15,$ where $p_1=3, p_2=5.$
   Then we have $$\tilde Q_n=\sqrt{8}\begin{pmatrix}
    2&1\\
    1&0
\end{pmatrix}\otimes \begin{pmatrix}
    \sqrt{2}&1\\
    1&0
\end{pmatrix}=\sqrt{8}\left(U_1\otimes U_2\right)$$
and the eigenvalues $u_{1,l}=\dfrac{2\pm \sqrt{8}}{2}$; $u_{2,l}=\dfrac{\sqrt{2}\pm\sqrt{6}}{2};$ The eigenvalues of $\tilde Q_n$ are $\sqrt{8}\prod\limits_{i=1}^{2}u_{i,l},$ where $l=1,2.$  So, we have the eigenvalues table (approximate) in the decreasing sequence, as follows.
\begin{table}[!ht]
    \centering
    \begin{tabular}{|c|c|c|c|}
        \hline
        &$\lambda_i(\tilde Q_n)-1$&$\lambda_i(Q_n)$&$\lambda_i(\tilde Q_n)$\\
        \hline
        1&12.1915&12.5326&$13.1915$\\
        \hline
        2&-0.393549&0.576439&$0.606451$\\
        \hline
        3&-3.2633&-2.36821&$-2.2633$\\
        \hline
       4&-4.53465&-3.74081& $-3.53465$\\
        \hline
\end{tabular}
\caption{Eigenvalues and its bounds for the matrix $Q_{15}.$}
\end{table}
Since $Q_n=\tilde Q_n+D,$ where $D=\text{diag}(-1,0,0,0).$ The above table verified the previous result.
$$\lambda_j(\tilde Q_n)-1\leq \lambda_j(Q_n)\leq \lambda_j(\tilde Q_n).$$
\end{ex}
\section{Laplacian Matrix Spectrum}\label{sec:laplacian}
For a graph \( \mathcal{G} \) with a vertex set of size \( m \), the Laplacian matrix is defined as \( L(\mathcal{G}) = D(\mathcal{G}) - A(\mathcal{G}) \), where \( D(\mathcal{G}) = \text{diag}(\alpha_1, \alpha_2, \dots, \alpha_m) \) is the diagonal matrix of vertex degrees, and \( A(\mathcal{G}) \) is the adjacency matrix. In the studies~\cite{Lap1,Lap2}, the authors investigated the Laplacian spectrum of the co-maximal graph of \( \mathbb{Z}_n .\) While they analyzed the integrability of the Laplacian matrix, the exact spectrum was not explicitly provided. In this work, we establish bounds for the Laplacian spectrum and provide additional eigenvalues that were not explicitly listed in their findings.

One can see that Laplacian matrix of the graph $\E_n$ is an $n\times n$ matrix $L=\Big[[d_{ij}]\Big],$ where $d_{ij}$ represents the block matrix corresponding to the classes $[d_{i}]$ and $[d_{j}],$ and $i,j\in \{1,2,3,\dots, 2^r\}.$ Then
		\[[d_{ij}]=
		\begin{dcases}
		-J&\quad\gcd(d_{i},d_{j})=1;\\
		nI-J&\quad d_{i}=d_{j}=1; \\
        \deg(d_i)I&\quad d_i=d_{j}\neq 1;\\
	     0&\quad\text{otherwise,}
		\end{dcases}
		\]
where $J$ and $0$ are square matrices of order $\Big|[d_{i}]\Big|\times\Big|[d_{j}]\Big|,$ all of whose entries are 1 and 0 respectively, $I$ is the identity matrix of order $\Big|[d_{i}]\Big|\times \Big|[d_{i}]\Big|.$

Each of the blocks has the same row sums, thus the partition of vertices given as an equitable partition. Then the quotient matrix of $L$ is given by 
\begin{equation}\label{eq:M}
M=
\begin{pmatrix}
n-\varphi(n)&-a_1&-a_2&\dots&-a_{2^r-2}&-a_{2^r-1}\\
-\varphi(n)&\deg(d_2)&-\theta_{23}&\dots&-\theta_{2(2^r-1)}&0\\
-\varphi(n)&-\theta_{32}&\deg(d_3)&\dots&-\theta_{3(2^r-1)}&0\\
\vdots&\vdots&\vdots&\ddots&\vdots&\vdots\\
-\varphi(n)&-\theta_{(2^r-1)2}&-\theta_{(2^r-1)3}&\dots&\deg(d_{2^r-1})&0\\
-\varphi(n)&0&0&\dots&0&\varphi(n)
\end{pmatrix},
\end{equation}
where {\[ \theta_{ij}=\begin{dcases}
    a_{j-1}=\dfrac{n}{n_0}\varphi\left(\frac{n_0}{d_{j}}\right)&\quad \text{ if } \gcd(d_i,d_j)=1;\\
    0 &\quad \text{ if } \gcd(d_i,d_j)\neq 1.
\end{dcases}\]}
Moreover, we have  
\begin{equation}\label{eq:lap2}
    a_1+a_2+\dots +a_{2^r-2}+a_{2^r-1}=n-\varphi(n).
\end{equation}
Recall that the characteristic polynomial of $M$ divides $\phi_L(x)$. However, determining all the eigenvalues of $M$ is not straightforward. We will explore alternative methods to identify the eigenvalues of $M$. 

To begin, we present a result that provides the Laplacian eigenvalues of a generalized join of a family of graphs. This result involves a symmetric matrix associated with the Laplacian matrix, which we later demonstrate to be similar to $M$ in our case.
\begin{theorem}[Theorem 8,~\cite{Symm}]\label{Theo:LSQ}
    Let $\mathcal{F}$ be a family of $k$-graphs $\G_j$ of order $n_j$ with $j\in\{1,2,\dots, k\},$ with Laplacian spectrum $\sigma_L(\G_j).$ If $\H$ is a graph such that $V(\H)=\{1,2,\dots,k\},$  then the Laplacian spectrum of $\H[\G_1,\dots,\G_k]$ is given by
    $$\sigma_L\left(\H[\G_1,\dots,\G_k]\right)=\left(\bigcup\limits_{i=1}^{k}N_j+\left(\sigma_L(G_j)\setminus\{0\}\right)\right)\bigcup \sigma(L_Q),$$ where 
 $L_Q$ is the symmetric matrix
    $$L_Q=\begin{pmatrix}
       N_1&-\rho_{1,2}&\dots &-\rho_{1,k-1}&-\rho_{1,k}\\
        -\rho_{1,2}&N_2&\dots&-\rho_{2,k-1}&-\rho_{2,k}\\
        \vdots&\vdots&\ddots&\vdots&\vdots\\
        -\rho_{1,k}&-\rho_{2,k}&\dots&-\rho_{k-1,k}&N_k
    \end{pmatrix},$$
and  $\rho_{i,j}=\begin{dcases}
    \sqrt{n_in_j} & \text{ if } ij\in E(\H)\\
    0& \text{otherwise}
\end{dcases};$
$N_j=\begin{dcases}
    \sum\limits_{i\in N_H(j)}n_i & \text{ if } N_{\H}(j)\neq 0,\\
 0 & \text{otherwise}.
\end{dcases}$
\end{theorem}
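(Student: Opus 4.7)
The plan is to exploit the block structure of the Laplacian matrix $L=L(\H[\G_1,\dots,\G_k])$ and split its eigenvector space into a ``local'' part (vectors supported on a single $V(\G_j)$) and a ``global'' part (vectors constant on each $V(\G_j)$). Indexing rows and columns according to the partition $V=\bigcup_j V(\G_j)$, the diagonal block on $V(\G_j)$ equals $L(\G_j)+N_jI_{n_j}$, since every vertex of $V(\G_j)$ gains exactly $N_j$ new neighbors from the joined blocks; the off-diagonal block indexed by $(i,j)$ is $-J_{n_i\times n_j}$ when $ij\in E(\H)$ and the zero block otherwise.

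First I would extract the local eigenvectors. Fix $j$ and let $v$ be an eigenvector of $L(\G_j)$ orthogonal to $\on$, i.e.\ $\on^T v=0$. Extend $v$ by placing it on the coordinates of $V(\G_j)$ and zero elsewhere. Every off-diagonal block $-J$ annihilates this extension because $Jv=\on(\on^T v)=0$, so $L$ acts on the extension through the diagonal block alone, giving eigenvalue $\lambda+N_j$ whenever $L(\G_j)v=\lambda v$. Since the eigenspace of $L(\G_j)$ orthogonal to $\on$ has dimension $n_j-1$, this produces $\sum_j(n_j-1)$ linearly independent eigenvectors contributing exactly the multiset $\bigcup_j\bigl(N_j+(\sigma_L(\G_j)\setminus\{0\})\bigr)$.

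Next I would account for the remaining $k$ eigenvalues using vectors that are constant on each $V(\G_j)$. Parameterizing such a vector by $(c_1,\dots,c_k)$, a direct computation shows that $L$ preserves this $k$-dimensional subspace and acts on it by the matrix $M$ whose $(i,i)$ entry is $N_i$ and whose $(i,j)$ entry for $i\neq j$ is $-n_j$ if $ij\in E(\H)$ and $0$ otherwise. Applying the diagonal similarity $P=\text{diag}(\sqrt{n_1},\dots,\sqrt{n_k})$, a short entrywise check gives $PMP^{-1}=L_Q$, so $\sigma(M)=\sigma(L_Q)$, supplying the last $k$ eigenvalues.

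The two families are mutually orthogonal (vectors constant on each block meet the local piece only in the span of $\on$, which has been excluded) and together they span the whole space, so the eigenvalue list is complete. The crucial observation throughout is the identity $Jv=0$ for $v\in\on^\perp$, which is what decouples the intrinsic spectrum of each $\G_j$ from the joining structure; I expect the main obstacle to be the bookkeeping in the symmetrization $PMP^{-1}=L_Q$ and in the multiplicity count when some $\G_j$ is disconnected, so that removing a single $0$ from $\sigma_L(\G_j)$ still gives the right dimension on the local side.
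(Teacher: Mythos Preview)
The paper does not supply a proof of this statement; it is quoted verbatim as Theorem~8 of \cite{Symm} and then applied. Your argument is the standard and correct one: the orthogonal decomposition of $\R^n$ into the ``local'' subspace (vectors with zero block-sums, on which the off-diagonal $-J$ blocks vanish) and the ``global'' subspace (blockwise-constant vectors) is exactly the mechanism behind the result, and your diagonal similarity $PMP^{-1}=L_Q$ with $P=\mathrm{diag}(\sqrt{n_1},\dots,\sqrt{n_k})$ is in fact the same conjugation the paper itself uses later (in the proof that $M$ and $L_Q$ are similar for $\E_n$). Your caveat about disconnected $\G_j$ is also well placed: the statement must be read with $\sigma_L(\G_j)\setminus\{0\}$ removing a \emph{single} copy of $0$ (the one carried by $\on_{n_j}$), and your dimension count of $n_j-1$ from $\on^\perp$ is precisely what forces that reading.
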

Now we apply Theorem~\ref{Theo:LSQ}
to get the spectrum of the matrix $L$ of $\E_n.$
\begin{theorem}\label{thm:lapS}
    The Laplacian spectrum of $\E_n=\H[\G_1,\G_2,\dots, \G_{2^r}],$ where $r$ is the number of distinct prime factors of $n,$ and $\G_j$ are the induced subgraphs of order $n_j$ with $j\in\{1,2,\dots, 2^r\}$ is given by
    $$\sigma_{L}(\E_n)=\Bigg\{n^{(\varphi(n)-1)},\,\deg(d_2)^{(a_1-1)},\,\deg(d_3)^{(a_2-1)},\dots,\deg(d_{2^r-1})^{(a_{2^r-2})},\, \varphi(n)^{(a_{2^r-1})}\Bigg\} \bigcup \sigma(L_Q),$$ 
where the symmetric matrix $L_Q$ is as follows:
$$L_Q=\begin{pmatrix}
       n-\varphi(n)&-\rho_{1,2}&\dots &-\rho_{1,2^r-1}&-\rho_{1,2^r}\\
        -\rho_{1,2}&\deg(d_2)&\dots&-\rho_{2,2^r-1}&-\rho_{2,2^r}\\
        \vdots&\vdots&\ddots&\vdots&\vdots\\
        -\rho_{1,2^r}&-\rho_{2,2^r}&\dots&-\rho_{2^r-1,2^r}&\varphi(n)
    \end{pmatrix},$$
and $\rho_{i,j}=\begin{dcases}
    \sqrt{a_{i-1}a_{j-1}} & \text{ if } \gcd(d_i,d_j)=1,\\
    \sqrt{\varphi(n)a_{j-1}} & \text{ if } d_i=1 \text{ and } d_j\neq 1,\\
    0& \text{ otherwise.}
\end{dcases}$
\end{theorem}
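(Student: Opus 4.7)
The plan is to apply Theorem~\ref{Theo:LSQ} directly to the decomposition $\E_n = \H[\G_1,\G_2,\dots,\G_{2^r}]$ established in Section~\ref{sec:graph}. By Lemma~\ref{lem:class}, $\G_1$ is the complete graph $K_{\varphi(n)}$ on the unit class $[1]=U(n)$, while each $\G_j$ ($2 \leq j \leq 2^r$) is the empty graph on $a_{j-1} = |[d_j]|$ vertices. Consequently $\sigma_L(\G_1) \setminus \{0\} = \{\varphi(n)^{(\varphi(n)-1)}\}$ and $\sigma_L(\G_j)\setminus\{0\} = \{0^{(a_{j-1}-1)}\}$ for $j \geq 2$.

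Next I would compute the neighbourhood sums $N_j$, which by Theorem~\ref{Theo:LSQ} add up the orders of the blocks adjacent to $\G_j$ in $\H$. Since $[1]$ is joined in $\H$ to every other class, $N_1 = \sum_{i \geq 2} a_{i-1} = n - \varphi(n)$ by Equation~\ref{eq:lap2}. For $2 \leq j \leq 2^r - 1$, adjacency in $\H$ is coprimality of the representatives, so
\[
N_j \;=\; \sum_{i \,:\, \gcd(d_i,d_j) = 1} |[d_i]|,
\]
which is precisely the number of neighbours of any element of $[d_j]$ in $\E_n$, i.e.\ $\deg(d_j)$. For $j = 2^r$, the class $[n_0]$ is adjacent in $\H$ only to $[1]$, hence $N_{2^r} = \varphi(n)$.

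Substituting these data into Theorem~\ref{Theo:LSQ}, the block $\G_1$ contributes the eigenvalue $N_1 + \varphi(n) = n$ with multiplicity $\varphi(n) - 1$; each interior empty block $\G_j$ ($2 \leq j \leq 2^r - 1$) contributes $\deg(d_j)$ with multiplicity $a_{j-1} - 1$; and the final block $\G_{2^r}$ contributes $\varphi(n)$ with multiplicity $a_{2^r-1} - 1$. Together these yield the explicit part of $\sigma_L(\E_n)$ listed in the statement. It remains to identify the symmetric matrix $L_Q$: its diagonal entries are exactly the $N_j$ just computed, giving the sequence $n-\varphi(n),\deg(d_2),\dots,\deg(d_{2^r-1}),\varphi(n)$. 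Its off-diagonal entries are $-\rho_{i,j} = -\sqrt{n_i n_j}$ whenever $\{i,j\}$ is an edge of $\H$. For $i = 1$ and $j \geq 2$ this produces $-\sqrt{\varphi(n)\,a_{j-1}}$ (since $[1]$ is adjacent to every other class), while for $2 \leq i < j$ it gives $-\sqrt{a_{i-1}\,a_{j-1}}$ exactly when $\gcd(d_i,d_j) = 1$ and zero otherwise. This matches the $\rho_{i,j}$ displayed in the theorem, completing the identification.

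I anticipate no substantive obstacle; the argument is essentially bookkeeping within the framework provided by Theorem~\ref{Theo:LSQ}. The only step requiring a small verification is the identity $N_j = \deg(d_j)$ for the interior blocks, which follows immediately from the coprimality description of the edges of $\H$ together with the definition of adjacency in $\E_n$, and a quick dimension check ($(\varphi(n)-1) + \sum_{j\geq 2}(a_{j-1}-1) + 2^r = n$) confirms that no eigenvalues are missed.
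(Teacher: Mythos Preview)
Your proposal is correct and follows essentially the same route as the paper: both apply Theorem~\ref{Theo:LSQ} directly to the $\H$-join decomposition, compute the Laplacian spectra of the regular pieces $\G_j$ and the neighbourhood sums $N_j=\deg(d_j)$, and then read off the matrix $L_Q$. Your write-up is a bit more explicit than the paper's, and your multiplicity $a_{2^r-1}-1$ for the final listed eigenvalue $\varphi(n)$ is in fact the correct one (the exponent $a_{2^r-1}$ in the displayed statement appears to be a typo).
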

\begin{proof}
   Recall that $\G_1$ is $\varphi(n)-1$ regular graph, and other $\G_i's$ are 0-regular graphs. In addition, $n_1=\varphi(n)$ and $n_i=a_{i-1},$ where $2\leq i\leq 2^r.$ Thus the Laplacian matrix of $\G_1,$ denoted by $L(\G_1)=\varphi(n)I-(J-I).$ Therefore, $\sigma_{L(\G_1)}=\Big\{\varphi(n)^{(\varphi(n)-1)},0^{(1)}\Big\},$ however, $\sigma_{L(\G_i)}=\Big\{0^{(a_{i-1})}\Big\},$ where the exponent represent the multiplicity of the eigenvalues. Since $N_i$ represents the size of the neighbourhood set of the graph $\G_i$ in $\E_n.$ Clearly, $N_1=n-\varphi(n),$ and other $N_i's$ are given by $N_i=\deg(d_i).$ Moreover, $V(\G_1)\cup V(\G_2)\cup\dots V(\G_{2^r})$ is an equitable partition of $V(\E_n),$ and $L_Q$ is the associated symmetric matrix. Applying Theorem~\ref{Theo:LSQ}, we get the desired result.
\end{proof}
\begin{theorem}
    For a given $n,$ the matrices $L_Q$ and $M$ of $L$ are similar. Then $0,\varphi(n),n$ are some eigenvalues of $L_Q.$ 
\end{theorem}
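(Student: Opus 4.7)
The plan has two steps: first exhibit an explicit similarity between the quotient matrix $M$ and the symmetric matrix $L_Q$, so that they share a spectrum, and then hand-construct three eigenvectors of $M$ realizing the claimed eigenvalues $0$, $\varphi(n)$, and $n$.

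For the similarity I mirror the argument already used in the paper for the adjacency quotient matrix and take the diagonal conjugator
\[
P=\text{diag}\bigl(\sqrt{\varphi(n)},\,\sqrt{a_1},\,\sqrt{a_2},\,\ldots,\,\sqrt{a_{2^r-1}}\bigr),
\]
then verify $PMP^{-1}=L_Q$ entry-by-entry. The diagonal of $M$ is fixed, so the content is off-diagonal. For $j\ne 1$ the $(1,j)$-entry becomes $\sqrt{\varphi(n)/a_{j-1}}\cdot(-a_{j-1})=-\sqrt{\varphi(n)\,a_{j-1}}=-\rho_{1,j}$, and symmetrically for the first column. For $i,j\ne 1$ with $\gcd(d_i,d_j)=1$, the conjugation sends $-\theta_{ij}=-a_{j-1}$ to $-\sqrt{a_{i-1}a_{j-1}}=-\rho_{i,j}$; all remaining off-diagonal entries are zero in both matrices. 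Hence $M$ and $L_Q$ are similar and the problem reduces to producing eigenvectors of $M$.

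The eigenvalue $0$ is immediate: each row of $M$ sums to zero (for row $1$ by Equation~\ref{eq:lap2}, for an intermediate row $i$ using $\deg(d_i)=\varphi(n)+\sum_{j\ne 1,i}\theta_{ij}$, and for the last row by inspection), so $M\on=0$. For the eigenvalue $n$, my guide is the identity $L\chi_{[1]}=n\chi_{[1]}-\varphi(n)\on$ in the full Laplacian, which shows that $\chi_{[1]}-\tfrac{\varphi(n)}{n}\on$ is a class-constant $n$-eigenvector of $L$. Projecting onto the partition suggests the candidate
\[
v_n=\bigl(n-\varphi(n),\,-\varphi(n),\,-\varphi(n),\,\ldots,\,-\varphi(n)\bigr)^{T},
\]
and a row-by-row check using $\sum_{j\ne 1,i}\theta_{ij}=\deg(d_i)-\varphi(n)$ confirms $Mv_n=nv_n$.

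The eigenvalue $\varphi(n)$ is the subtlest step and is where I expect the only real obstacle, namely spotting the correct combination. The seed is $L\chi_{[n_0]}=\varphi(n)\chi_{[n_0]}-a_{2^r-1}\chi_{[1]}$, reflecting that every vertex of $[n_0]$ has degree $\varphi(n)$ and is adjacent only to $[1]$. This is not yet an eigenvector, but combining $\chi_{[n_0]}$ with $\chi_{[1]}$ to kill the stray $\chi_{[1]}$ term (using $L\chi_{[1]}=n\chi_{[1]}-\varphi(n)\on$) and then with $\on$ to absorb the resulting multiple of $\on$ (using $L\on=0$) leads to a $3\times 3$ linear system whose solution is
\[
w=a_{2^r-1}\chi_{[1]}+(n-\varphi(n))\chi_{[n_0]}-a_{2^r-1}\on,\qquad Lw=\varphi(n)w.
\]
Reading off the values of $w$ on the classes gives the $M$-eigenvector
\[
v_{\varphi(n)}=\bigl(0,\,-a_{2^r-1},\,-a_{2^r-1},\,\ldots,\,-a_{2^r-1},\,n-\varphi(n)-a_{2^r-1}\bigr)^{T},
\]
after which $Mv_{\varphi(n)}=\varphi(n)v_{\varphi(n)}$ is verified directly, in the same style as for $v_n$, completing the proof.
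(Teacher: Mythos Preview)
Your similarity argument via the diagonal conjugator $P$ is exactly the paper's, so that half is identical.

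For the eigenvalues $0,\varphi(n),n$ your route is correct but genuinely different from the paper's. You work at the level of the full Laplacian, use the relations $L\chi_{[1]}=n\chi_{[1]}-\varphi(n)\on$ and $L\chi_{[n_0]}=\varphi(n)\chi_{[n_0]}-a_{2^r-1}\chi_{[1]}$ together with $L\on=0$ to manufacture class-constant eigenvectors, and then read off the corresponding vectors for $M$; the verifications $Mv_n=nv_n$ and $Mv_{\varphi(n)}=\varphi(n)v_{\varphi(n)}$ do go through using $\sum_{j\ne 1,i}\theta_{ij}=\deg(d_i)-\varphi(n)$, exactly as you claim. The paper instead observes that the coarser index partition $\{1\}\cup\{2,\dots,2^r-1\}\cup\{2^r\}$ is again equitable for $M$ (the middle block has constant row sum $\varphi(n)$, the last column is zero on the middle rows, etc.), passes to the resulting $3\times 3$ quotient
\[
\hat M=\begin{pmatrix} n-\varphi(n) & -(n-\varphi(n)-\tfrac{n}{n_0}) & -\tfrac{n}{n_0}\\ -\varphi(n) & \varphi(n) & 0\\ -\varphi(n) & 0 & \varphi(n)\end{pmatrix},
\]
and reads off its eigenvalues $0,\varphi(n),n$ directly. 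The two approaches are really the same phenomenon viewed from opposite ends: your three eigenvectors are precisely the lifts of the eigenvectors of $\hat M$, since each of $\on$, $v_n$, $v_{\varphi(n)}$ is constant on the three blocks. The paper's version is shorter and avoids touching $L$ at all, while yours has the advantage of exhibiting the eigenvectors explicitly.
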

\begin{proof}
    Let $$P=\begin{pmatrix}
        \sqrt{\varphi(n)}&0&0&\dots&0\\
        0&\sqrt{a_1}&0&\dots&0\\
        0&0&\sqrt{a_2}&\dots&0\\
        \vdots&\vdots&\vdots&\ddots&\vdots\\
    0&0&0&\dots&\sqrt{a_{2^r-1}}
    \end{pmatrix}.$$
    The matrix $P$ is clearly invertible, and thus $PMP^{-1}=L_Q.$ Hence they are similar. 

    Recall that the row sum of $L$ is zero, hence $M$ has the row sum zero. Then the principal submatrix $M[\{2,3,\dots, 2^r-1\};\{2,3,\dots,2^r-1\}],$ which is of size $(2^r-2)\times (2^r-2)$ has row sum $\varphi(n)$(see~\ref{eq:M}), and since $a_{2^r-1}=\frac{n}{n_0},$ then by Equation~\ref{eq:lap2}, we have 
    $$-(a_1+a_2+\dots+a_{2^r-2})=-\left(n-\varphi(n)-\frac{n}{n_0}\right).$$
    Thus we can get the quotient matrix of $M$ as well, which is given by 
    $$\hat M=\begin{pmatrix}n-\varphi(n)&-\left(n-\varphi(n)-\frac{n}{n_0}\right)&-\frac{n}{n_0}\\
    -\varphi(n)&\varphi(n)&0\\-\varphi(n)&0&\varphi(n)\end{pmatrix}.$$
    It is easy to find the eigenvalues of the matrix $\hat M,$ that are $0,\varphi(n) \text{ and } n.$ Because of the  similarity of matrices, these are also the eigenvalues of $L_Q.$
\end{proof}
Since it is equivalent to determining the spectrum of the matrices \( L_Q \) and \( M .\) We still haven't get all the eigenvalues, as we can see the remaining eigenvalues proves to be challenging. Therefore, we aim to establish bounds for the eigenvalues of \( L_Q .\)
Note that we can write $$L_Q=-\tilde Q_n+D_{L},$$ where $D_{L}=\text{diag}(n,\deg(d_2),\deg(d_3),\dots,\deg(d_{2^r-1}),\varphi(n))$ and $\tilde Q_n$ is defined in previous section. We apply the Weyl's inequality to find the eigenvalues bounds for the matrix $L_Q.$ 

\begin{theorem}
    The eigenvalues bound for the matrix $L_Q$ of $L$ is given by
    $$\lambda_j(-\tilde Q_n)+\varphi(n)\leq \lambda_j(L_Q)\leq \lambda_j(-\tilde Q_n)+n,$$ where $1\leq j\leq 2^r.$
\end{theorem}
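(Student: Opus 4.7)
The strategy is to treat the statement as a direct application of Weyl's inequality (Corollary~\ref{cor:bound}) to the decomposition $L_Q = (-\tilde Q_n) + D_L$, where both summands are real symmetric matrices. Taking $A = -\tilde Q_n$ and $B = D_L$ in the Corollary immediately produces the sandwich
$$\lambda_j(-\tilde Q_n) + \lambda_{2^r}(D_L) \;\le\; \lambda_j(L_Q) \;\le\; \lambda_j(-\tilde Q_n) + \lambda_1(D_L).$$
So the entire problem reduces to identifying the largest and smallest eigenvalues of $D_L$, and the target inequality will follow once I show $\lambda_1(D_L)=n$ and $\lambda_{2^r}(D_L)=\varphi(n)$.

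The first step is therefore to read off the spectrum of $D_L$, which is trivial since $D_L$ is diagonal: its eigenvalues are exactly the multiset $\{n,\, \deg(d_2),\, \deg(d_3),\, \dots,\, \deg(d_{2^r-1}),\, \varphi(n)\}$. For the maximum, I would note that every vertex of the simple graph $\E_n$ has degree at most $n-1$, so each $\deg(d_i) \le n-1 < n$, and also $\varphi(n) < n$; hence $\lambda_1(D_L) = n$. For the minimum, I would invoke the closed form $\deg(d_s) = \tfrac{n}{d_s}\varphi(d_s)$ proved earlier in the paper, which since $d_s \mid n_0$ rewrites as $\deg(d_s) = n\prod_{p \mid d_s}(1 - 1/p)$, a product taken over a subset of the prime divisors of $n_0$. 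Because $n$ and $n_0$ share the same prime support, $\varphi(n) = n\prod_{p\mid n_0}(1-1/p)$, so each factor $(1-1/p)$ dropped from the full product only \emph{increases} the value; thus $\deg(d_s) \ge \varphi(n)$ for every $s$. Combined with $n > \varphi(n)$, this gives $\lambda_{2^r}(D_L) = \varphi(n)$.

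Plugging $\lambda_1(D_L) = n$ and $\lambda_{2^r}(D_L) = \varphi(n)$ back into the Weyl sandwich yields the claimed bounds. There is no serious obstacle here: the only non-cosmetic point is verifying $\deg(d_s) \ge \varphi(n)$, and the degree formula established in Section~\ref{sec:graph} makes this a one-line monotonicity observation on the Euler product. The proof is essentially a two-line application of Corollary~\ref{cor:bound} once the diagonal of $D_L$ is analyzed.
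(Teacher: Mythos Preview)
Your proposal is correct and follows essentially the same approach as the paper: decompose $L_Q = -\tilde Q_n + D_L$ and apply Corollary~\ref{cor:bound}. The only difference is that you supply an explicit argument for why the diagonal entries of $D_L$ lie between $\varphi(n)$ and $n$, whereas the paper simply asserts this, relying on the earlier observation in Section~\ref{sec:graph} that $\delta(\E_n)=\varphi(n)$ and $\Delta(\E_n)=n-1$.
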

 \begin{proof} 
   We have $L_Q=-\tilde Q_n+D_{L}.$ We know the eigenvalues of $D_{L},$ in which the least value is $\varphi(n)$ and the greatest is $n.$ Then we apply Corollary~\ref{cor:bound}, we get
   $$\lambda_j(-\tilde Q_n)+\lambda_n(D_{L})\leq \lambda_j (L_Q)\leq \lambda_j(-\tilde Q_n)+\lambda_1(D_{L}).$$ Thus, $$\lambda_j(-\tilde Q_n)+\varphi(n)\leq \lambda_j(L_Q)\leq \lambda_j(-\tilde Q_n)+n,$$ where $1\leq j\leq 2^r.$
 \end{proof}
 Let us illustrate it in the following example. 
 \begin{ex}
    Let $n=15,$ where $p_1=3, p_2=5.$
   Then we have $$\tilde Q_n=\sqrt{8}\begin{pmatrix}
    2&1\\
    1&0
\end{pmatrix}\otimes \begin{pmatrix}
    \sqrt{2}&1\\
    1&0
\end{pmatrix}=\sqrt{8}\left(U_1\otimes U_2\right)$$
and the eigenvalues $u_{1,l}=\dfrac{2\pm \sqrt{8}}{2}$; $u_{2,l}=\dfrac{\sqrt{2}\pm\sqrt{6}}{2};$ The eigenvalues of $\tilde Q_n$ are $\sqrt{8}\prod\limits_{i=1}^{2}u_{i,l},$ where $l=1,2.$  So we have the eigenvalues in the decreasing sequence as follows.
\newpage
\begin{table}[!ht]
    \centering
    \begin{tabular}{|c|c|c|c|}
        \hline
        &$\lambda_i(-\tilde Q_n)+8$&$\lambda_i(L_Q)$&$\lambda_i(-\tilde Q_n)+15$\\
        \hline
        1&11.5346&15&$18.5346$\\
        \hline
        2&10.2633&14&$17.2633$\\
        \hline
        3&7.39355&8&$14.39355$\\
        \hline
       4&-5.1915&0& $1.8085$\\
        \hline
\end{tabular}
\caption{Eigenvalues and its bounds for the matrix $L_Q.$}
\end{table}
The following observations can be made from the above table.
$$\lambda_j(-\tilde Q_n)+8\leq \lambda_j(L_Q)\leq \lambda_j(-\tilde Q_n)+15.$$
\end{ex}
\begin{theorem}
  The relation between the matrices $L_{Q(n)}$ and $L_{Q{(n_0)}}$ of $\E_n$ and $\E_{n_0},$  respectively is given by
        $$L_{Q(n)}=\dfrac{n}{n_0} L_{Q{(n_0)}}.$$
\end{theorem}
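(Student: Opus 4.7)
The plan is to reduce the claim to two already-established (or easily established) scaling facts about the summands appearing in $L_Q$. By construction one can write
\[
L_{Q(n)} \;=\; -\tilde Q_n + D_{L(n)}, \qquad D_{L(n)} = \operatorname{diag}\bigl(n,\,\deg_n(d_2),\,\deg_n(d_3),\dots,\deg_n(d_{2^r-1}),\,\varphi(n)\bigr),
\]
and similarly for $n_0$ in place of $n$. So it suffices to prove separately that $\tilde Q_n = \tfrac{n}{n_0}\tilde Q_{n_0}$ and $D_{L(n)} = \tfrac{n}{n_0}D_{L(n_0)}$, after which linearity of the decomposition gives the result.

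The first identity is exactly the proposition already proved for the adjacency symmetric matrix, so I would simply cite it. For the diagonal identity I would verify scaling entry-by-entry. The $(1,1)$ entry is $n=\tfrac{n}{n_0}\cdot n_0$. The last entry uses $\varphi(n)=\tfrac{n}{n_0}\varphi(n_0)$, which holds because $n$ and $n_0$ share the same set of prime divisors (the whole point of passing to the square-free part). For the middle entries indexed by $d_i$ with $2\le i\le 2^r-1$, the formula $\deg(d_i)=\tfrac{n}{d_i}\varphi(d_i)$ proved earlier in Section~\ref{sec:graph} gives immediately
\[
\deg_n(d_i) \;=\; \frac{n}{d_i}\varphi(d_i) \;=\; \frac{n}{n_0}\cdot\frac{n_0}{d_i}\varphi(d_i) \;=\; \frac{n}{n_0}\,\deg_{n_0}(d_i),
\]
since $d_i\mid n_0$. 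Hence every diagonal entry of $D_{L(n)}$ is $\tfrac{n}{n_0}$ times the corresponding entry of $D_{L(n_0)}$.

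Combining the two scaling statements,
\[
L_{Q(n)} \;=\; -\tilde Q_n + D_{L(n)} \;=\; -\tfrac{n}{n_0}\tilde Q_{n_0} + \tfrac{n}{n_0}D_{L(n_0)} \;=\; \tfrac{n}{n_0}\bigl(-\tilde Q_{n_0}+D_{L(n_0)}\bigr) \;=\; \tfrac{n}{n_0}\,L_{Q(n_0)}.
\]
There is no real obstacle here; the only point that requires a moment's care is making sure the index set $\{d_1,\dots,d_{2^r}\}$ of divisors of $n_0$ is the \emph{same} for $\E_n$ and $\E_{n_0}$, so that entry-wise comparison of the two matrices even makes sense. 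This is precisely guaranteed by the definition of $n_0$ as the square-free part of $n$, so both matrices have the same size $2^r\times 2^r$ and their rows/columns are indexed by the same tuple $(d_1,d_2,\dots,d_{2^r})$ under the ordering fixed in Section~\ref{sec:adjacency}.
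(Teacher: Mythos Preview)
Your proof is correct and follows essentially the same approach as the paper: decompose $L_{Q(n)}=-\tilde Q_n+D_{L(n)}$, invoke the previously established scaling $\tilde Q_n=\tfrac{n}{n_0}\tilde Q_{n_0}$, and use the degree formula $\deg(d_i)=\tfrac{n}{d_i}\varphi(d_i)$ to factor $\tfrac{n}{n_0}$ out of the diagonal. Your version is slightly more explicit in checking the first and last diagonal entries and in remarking that the index tuple $(d_1,\dots,d_{2^r})$ is the same for $\E_n$ and $\E_{n_0}$, but the argument is identical in substance.
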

\begin{proof}
    Note that $$D_{L}=\text{diag}(n,\deg(d_2),\deg(d_3),\dots,\deg(d_{2^r-1}),\varphi(n)).$$ From Equation~\ref{eq:deg}, for $d_i\neq 1,$ we have
    $$\deg(d_i)=\frac{n}{d_i}\varphi(d_i).$$ So,
    we can take $\frac{n}{n_0}$ as a common factor from all the diagonal entries of $D_L.$ Also we have $\tilde Q_{n}=\frac{n}{n_0}\tilde Q_{n_0}.$ Thus,  the $L_{Q(n)}$ can be simplified as $$L_{Q(n)}=-\tilde Q_{n}+D_{L}=\frac{n}{n_0}(-\tilde Q_{n_0}+D_{L(n_0)})=L_{Q(n_0)}.$$
\end{proof}
\begin{cor}
    The Laplacian eigenvalues of the graph \(\E_{n}\) can be obtained by simply multiplying the Laplacian eigenvalues of \(\E_{n_0}\) by \(\frac{n}{n_0}.\)
\end{cor}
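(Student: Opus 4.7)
The plan is to combine the immediately preceding theorem with Theorem~\ref{thm:lapS} and trace how each piece of the Laplacian spectrum transforms under the replacement of $n$ by $n_0.$ By Theorem~\ref{thm:lapS}, $\sigma_L(\E_n)$ decomposes as the union of the explicit eigenvalue list
$\{n,\,\deg(d_2),\,\ldots,\,\deg(d_{2^r-1}),\,\varphi(n)\}$ (with prescribed multiplicities) and the spectrum $\sigma(L_Q).$ Thus it suffices to verify that, passing from $\E_{n_0}$ to $\E_n,$ each eigenvalue in either piece is simply rescaled by the factor $\frac{n}{n_0}.$

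For the $L_Q$-part, the preceding theorem has already established the matrix identity $L_{Q(n)}=\frac{n}{n_0}L_{Q(n_0)},$ which immediately implies $\sigma(L_{Q(n)})=\frac{n}{n_0}\sigma(L_{Q(n_0)}).$ For the explicit eigenvalues, the verifications I would carry out are elementary arithmetic: trivially $n=\frac{n}{n_0}\cdot n_0;$ since $n$ and $n_0$ share the same prime divisors, $\varphi(n)=\frac{n}{n_0}\varphi(n_0);$ and Equation~\ref{eq:deg} gives
$$\deg(d_j)=\frac{n}{d_j}\varphi(d_j)=\frac{n}{n_0}\cdot\frac{n_0}{d_j}\varphi(d_j),$$
which is $\frac{n}{n_0}$ times the degree of $d_j$ in $\E_{n_0}.$ Assembling these, every value appearing in $\sigma_L(\E_n)$ is precisely $\frac{n}{n_0}$ times a value appearing in $\sigma_L(\E_{n_0}),$ which is the statement of the corollary.

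The one bookkeeping point I would flag (but not expect to be a real obstacle) is that the multiplicities do not scale by $\frac{n}{n_0}$: for instance, the eigenvalue $n$ appears in $\sigma_L(\E_n)$ with multiplicity $\varphi(n)-1,$ not $\frac{n}{n_0}(\varphi(n_0)-1).$ Consequently, the corollary is best read as a statement about eigenvalues as a set of values, with the values scaled by $\frac{n}{n_0}$ and the multiplicities recomputed from the formulas $\varphi(n)-1$ and $a_{j-1}-1=\frac{n}{n_0}\varphi(n_0/d_j)-1.$ Since the nontrivial work — identifying the closed-form relation between $L_{Q(n)}$ and $L_{Q(n_0)}$ — is already accomplished in the previous theorem, the corollary is essentially an immediate readout of Theorem~\ref{thm:lapS}; I do not anticipate any genuine difficulty beyond this arithmetic bookkeeping.
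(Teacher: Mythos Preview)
Your proposal is correct and follows exactly the paper's approach: the paper's proof is the single sentence ``This is the direct consequence of the above theorem and Theorem~\ref{thm:lapS},'' and you have simply unpacked that sentence in detail. Your additional remark about multiplicities is a valid caveat that the paper itself does not address.
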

\begin{proof}
This is the direct consequence of the above theorem and Theorem~\ref{thm:lapS}.
\end{proof}
Therefore, the problem essentially reduces to solving for \(n_0.\)
 \section{Conclusion}
 In the paper, we introduce a refined partition of the set \( \{0, 1, 2, \dots, n-1\} \), which not only facilitates the study of the structure of \( \mathbb{Z}_n \) but also its generating graph. During our investigation of \( \E_n \), we observed that the co-maximal graphs of a ring exhibit overlap in the case of \( \mathbb{Z}_n.\) While some studies have explored the spectra of adjacency and Laplacian matrices of co-maximal graphs, the problem remains only partially resolved. Utilizing matrix analysis tools, we provide tighter bounds for the eigenvalues, advancing the understanding of this problem.
 \section* {Statements and Declarations}
\subsection* {Competing interests} The authors declare that they have no competing interests.
\subsection* {Author contributions} All the authors have the same amount of contribution.
\subsection* {Funding} The first named author is supported by Shiv Nadar Institution of Eminence Ph.D. Fellowship.
\subsection*{Availability of data and materials} Data sharing does not apply to this article as no data sets
were generated or analysed during the current study.

 \section*{Acknowlegdement}
 The authors sincerely express their gratitude to the anonymous reviewers for their constructive feedback and insightful suggestions, which have significantly enhanced the quality of this work. Additionally, authors extend special thanks to Krishna Kumar Gupta, a fellow PhD student for his helpful suggestions.
   
 \addcontentsline{toc}{section}{Bibliography}
 \bibliographystyle{unsrt}
 \bibliography{refer}
\end{document}